\documentclass{article}
\usepackage{psfrag}
\usepackage[parfill]{parskip}
\usepackage{float}
\usepackage{graphicx}
\usepackage{epsfig}
\usepackage{amsmath, amsthm, amssymb}
\usepackage{verbatim}
\usepackage{multicol}
\usepackage{url}
\usepackage{latexsym}

\usepackage{mathrsfs}
\usepackage[colorlinks = true,
            linkcolor  = blue,
            citecolor  =blue,
            urlcolor   =blue,
            bookmarks  = true]{hyperref}
 \usepackage{enumitem}
\setlist[enumerate,1]{label=(\roman*)}
\usepackage[normalem]{ulem}
\usepackage{bm}
\usepackage{dsfont}
\usepackage{stmaryrd}
\usepackage[dvipsnames]{xcolor}  

\newcommand{\address}[1]{\begin{center}\small #1\end{center}}
\usepackage[backend=biber, style=numeric, citestyle=numeric-comp]{biblatex}
\DeclareFieldFormat{labelnumberwidth}{\mkbibbrackets{#1}}
\addbibresource{ref.bib}
\DeclareNameAlias{author}{family-given}

\DeclareFieldFormat*{title}{#1}

\AtEveryBibitem{%
  \clearfield{issn}
  \clearfield{url}
  \clearfield{eprint}
}

\renewbibmacro{in:}{}

\DeclareFieldFormat[article]{volume}{\textbf{#1}}
\DeclareFieldFormat[article]{number}{\textbf{#1}}
\renewbibmacro*{volume+number+eid}{%
  \printfield{volume}
  \iffieldundef{number}
    {}%
    {\printtext[parens]{\printfield{number}}}
  \setunit{\addcomma\space}%
  \printfield{eid}}

\DeclareFieldFormat[journal]{journaltitle}{\mkbibemph{#1}}

\DeclareNameAlias{author}{family-given}
\DeclareNameAlias{editor}{family-given}


\DeclareDelimFormat{namedelim}{\addcomma\space}
\DeclareFieldFormat[article,incollection,inproceedings,book]{giveninit}{\mkbibnamegiven{#1}}

\DeclareNameFormat{family-given}{%
  \usebibmacro{name:family-given}
    {\namepartfamily}
    {\namepartgiveni} 
    {\namepartprefix}
    {\namepartsuffix}%
  \usebibmacro{name:andothers}}

\DeclareFieldFormat{pages}{#1}

\flushbottom
\oddsidemargin=0in
\evensidemargin=0in
\textwidth=6.5in
\setlength{\unitlength}{1cm}
\setlength{\parindent}{0.6cm}

\numberwithin{equation}{section}

\theoremstyle{plain}

\newtheorem{myrem}{Remark}[section]
\newtheorem{Theorem}{Theorem}[section]
\newtheorem{lemma}{Lemma}[section]
\newtheorem{proposition}{Proposition}[section]

\def\author#1{\par
    {\centering{\authorfont#1}\par\vspace*{0.05in}}
}

\def\titlefont{\fontsize{13}{15}\bfseries\boldmath\selectfont\centering{}}
\def\authorfont{\fontsize{13}{15}}

\let\affiliationfont\rhfont

\def\address#1{\par
    {\centering{\affiliationfont#1\par}}\par\vspace*{11pt}
}
\def\title#1{
    \thispagestyle{plain}
    \vspace*{-14pt}
    \vskip 79pt
    {\centering{\titlefont #1\par}}%
    \vskip 1em
}

\begin{document}

\title{The smallest singular value of
sparse discrete random matrices}

\vspace{1cm}

\author{Kexin Yu}
\address{Shandong University\\Kexinyu01@gmail.com}

\vspace{0.3cm}

\begin{abstract}
Let $M_n$ be an $n \times n$ random matrix with i.i.d. sparse discrete entries. In this paper, we develop a simple framework to solve the approximate Spielman-Teng theorem for $M_n$, which has the following form: There exist constants $C, c>0$ such that for all $\eta \geq 0$, $\textbf{P}\left(s_n\left(M_n\right) \leq \eta\right) \lesssim n^C \eta+\exp \left(-n^c\right)$.  As an application, we give an approximate Spielman-Teng theorem for $M_n$ whose entries are $\mu-$lazy random variables, extending previous work by Tao and Vu.
\end{abstract}

\section{Introduction and Main Results}\label{sec1}
Let $A_n$ be an $n\times n$ real matrix with singular values $s_1(A_n)\ge\cdots\ge s_n(A_n),$ in particular, the largest and smallest singular values of $A_n$ are defined to be 
\begin{align*}
s_1(A_n) = \sup_{x\in \mathbb{S}^{n-1}}\|A_n x\|_2, \quad s_n(A_n) = \inf_{x\in \mathbb{S}^{n-1}}\|A_n x\|_2,
\end{align*}
where $\|\cdot\|_2$ denotes the Euclidean norm on $\mathbb{R}^n$, and $\mathbb{S}^{n-1}$ denotes the $n-1$ dimensional Euclidean sphere in $\mathbb{R}^n$.
\par
In computer science, the condition number of $A_n$, i.e., the ratio of $s_1(A_n)$ and $s_n(A_n)$, is an important indicator to measure the stability of a linear system $A_n x=b$ to input perturbations. The larger the condition number, the more unstable the system is, and the error of the numerical solution will be magnified. Therefore, the condition number is widely used in many fields, such as algorithm stability analysis, numerical linear algebra, machine learning, and signal processing.
\par
Bounding the condition number of $A_n$ is an important problem in this field. The behavior of $s_1(A_n)$ is well understood under general assumptions on matrix entries $\xi$. In particular, Yin, Bai and Krishnaiah \cite{MR950344} proved that, when $\mathrm{E} \xi=0$ and $\mathrm{E}|\xi|^4<\infty$, with high probability
\begin{align*}
s_1(A_n) \sim \sqrt{n}.
\end{align*}
Hence, the main problem in estimating the condition number's magnitude is exploring the lower bound of $s_n(A_n)$.
\par
For Gaussian random matrices with i.i.d. $N(0,1)$ entries, the magnitude of $s_n(A)$ is of the order $1 / \sqrt{n}$ with high probability. This observation goes back to von Neumann \cite{MR157875}, and it was proved by Edelman \cite{doi:10.1137/0609045} that
\begin{align*}
\textbf{P}\left(s_n\left(A_n\right) \leq \varepsilon n^{-1 / 2}\right) \sim \varepsilon.
\end{align*}

For Rademacher random matrices with i.i.d. $\{\pm1\}$ entries, Spielman and Teng \cite{MR1989210} conjectured in the 2002 ICM survey that the same tail estimate (up to an exponentially small correction) should hold for Rademacher random matrices. This conjecture has since become known as the Spielman-Teng conjecture. It has motivated a great deal of work on the smallest singular value of random matrices.

The breakthrough work of Rudelson \cite{MR2434885} gave the first bound on the smallest singular value beyond the Gaussian case. He proved that if $A_n$ has i.i.d. mean–zero, unit–variance subgaussian entries, then
\begin{align*}
\textbf{P}\left(s_n\left(A_n\right) \leq \varepsilon n^{-3 / 2}\right) \lesssim \varepsilon+n^{-3 / 2}.
\end{align*}
Shortly thereafter, Tao and Vu \cite{MR2480613} developed the inverse Littlewood-Offord theory, which, as an important application, gave a new bound on the
smallest singular value of Bernoulli random matrices with i.i.d. $0/1$ entries: for any $A>0$, there exists $B>0$ such that,  
\begin{align*}
\textbf{P}\bigl(s_n(A_n)\le n^{-B}\bigr)\le n^{-A}.
\end{align*}
Based on the Littlewood-Offord problem, Rudelson and Vershynin \cite{MR2407948} proved a refined result; they gave
\begin{align*}
\textbf{P}\left(s_n\left(A_n\right) \leq \varepsilon n^{-1 / 2}\right) \leq C \varepsilon+e^{-c n},
\end{align*}
 for all random matrices with i.i.d. mean–zero, unit–variance subgaussian entries. This perspective was developed further in the 2020 breakthrough work of Tikhomirov \cite{MR4076632}, for Bernoulli random matrices, he gave more refined "constants",
\begin{align*}
\textbf{P}\left(s_n\left(A_n\right) \leq \varepsilon n^{-1 / 2}\right) \leq C \varepsilon+\left(\frac{1}{2}+o_n(1)\right)^n.
\end{align*}

\par
Recently, Ferber, Jain, Luh and Samotij \cite{MR4273471} developed a new approach to the counting problem in the inverse Littlewood-Offord theory, and then Jain \cite{MR4282089} gave a novel framework for finding probabilistic lower bounds on the smallest singular value of a discrete random matrix in a truly combinatorial setting. 

Inspired by previous work, this paper develops the framework described above for the study of the smallest singular value of a sparse discrete random matrix.
For any $0 \leq \mu \leq 1$, let $\eta^\mu \in\{-1,0,1\}$ denote a random variable equal to 0 with probability $1-\mu$ and $\pm 1$ with probability $\mu / 2$ each. This type of random variable is known as $\mu-$lazy random variable. A $\mu-$lazy random matrix is a matrix whose entries are i.i.d. $\mu-$lazy random variables. When $\mu$ is very small, the $\mu-$lazy random matrix is an essential example of sparse discrete random matrices. Sparse random matrices are important for the study of combinatorial random matrices. 
We obtain the small ball probability estimate for a $\mu-$lazy random matrix, and our method can be easily extended to general sparse discrete random matrices. In addition, the inverse Littlewood-Offord theorem tells us that when linear combinations of random variables are highly concentrated, their coefficient vectors must have some additive structure. Tao and Vu \cite{MR2480613}, in order to conveniently study the relationship between the concentration function and the vector structure of the random walks as described above, consider a fixed $\mu$ that can take values in $[0,1]$. However, in sparse conditions, many vectors have very few nonzero entries. Thus, in this paper, we consider a model in which the value of $\mu$ can tend to 0 as an extension of the study of Tao and Vu \cite{MR2480613}. In the remainder of this paper, $\mu_n$ means that $\mu$ is no longer fixed but converges to $0$ as $n$ increases.
\par
Specifically, we define the largest atom probability as follows.
\par
For an integer vector $\boldsymbol{w} \in \mathbb{Z}^n$, we define its largest atom probability to be
\begin{align*}
\rho^{(\mu)}(\boldsymbol{w}):=\sup _{x \in \mathbb{Z}} \textbf{P}\left(\eta^\mu_1 w_1+\cdots+\eta^\mu_n w_n=x\right),
\end{align*}
where $\eta^\mu_1, \ldots, \eta^\mu_n$ are i.i.d. $\mu-$lazy random variables.
\par
Let $Y_\mu(\boldsymbol{w}) := \sum_{i=1}^n\eta_i^{\mu}w_i$, then $Y_\mu(\boldsymbol{w})$ is a class of discrete random walks in integer space $\mathbb{Z}^n$, which start at the origin and consist of $n$ steps, where at the $i^{\text {th }}$ step one moves backwards or forwards with magnitude $v_i$ and probability $\mu / 2$, and stays at rest with probability $1-\mu$.
\par
Notice that the probability $\textbf{P}\left(\eta^\mu_1 w_1+\cdots+\eta^\mu_n w_n=0\right)$ is closely connected to the singularity probability of the matrix $M_n$, where $M_n$ is defined as an $n \times n$ random matrix with i.i.d. $\mu-$lazy entries. Since for any $\boldsymbol{w}=(w_1,...,w_n)$ (which we interpret as a column
vector), the entries in the product $M_n\boldsymbol{w}$ are independent copies of $Y_\mu(\boldsymbol{w})$. It follows that for such a row-independent matrix $M_n$, the study of its smallest singular value is an important application of the study of the inverse Littlewood-Offord theorem. Based on this, the paper obtained the following main results.

\begin{Theorem}\label{main}
Let $M_n$ denote an $n \times n$ random matrix, each of whose entries is an i.i.d. $\mu_n-$lazy random variable. Then, for any $\eta \geq 2^{-n^{0.0001}}$, $\mu_n\geq n^{-0.45}$,
\begin{align*}
\textbf{P}\left(s_n\left(M_n\right) \leq \eta\right) \lesssim \frac{\eta n^{3 / 2}}{\sqrt{\mu_n}}.
\end{align*}
\end{Theorem}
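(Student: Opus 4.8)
The plan is to run the Rudelson--Vershynin compressible/incompressible decomposition \cite{MR2407948} together with a distance-to-a-hyperplane estimate, every step recalibrated to the sparsity level $\mu_n$, with the inverse Littlewood--Offord input supplied by the counting method of Ferber--Jain--Luh--Samotij \cite{MR4273471} and Jain \cite{MR4282089} adapted to $\mu_n$-lazy walks. Write $X_1,\dots,X_n$ for the rows of $M_n$, so the coordinates of $M_n\boldsymbol{w}$ are i.i.d.\ copies of $Y_{\mu_n}(\boldsymbol{w})$, and note $s_1(M_n)\lesssim\sqrt{\mu_n n}$ with probability at least $1-e^{-c\mu_n n}$ by a standard net/Bernstein bound. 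Fix a sparsity threshold $\delta=\delta(\mu_n)$ --- which must be allowed to shrink with $\mu_n$, roughly $\delta\asymp\mu_n/\log(1/\mu_n)$ --- and a small $\rho$, and split $\mathbb{S}^{n-1}=\mathrm{Comp}(\delta,\rho)\cup\mathrm{Incomp}(\delta,\rho)$ into vectors within Euclidean distance $\rho$ of a $\delta n$-sparse vector and the rest. On $\mathrm{Comp}$ the natural scale of $\|M_n\boldsymbol{w}\|_2$ is $\sqrt{\mu_n n}$: for a fixed $\delta n$-sparse unit $\boldsymbol{w}_0$, $Y_{\mu_n}(\boldsymbol{w}_0)$ has variance $\mu_n$ and a per-row bound $\textbf{P}(|Y_{\mu_n}(\boldsymbol{w}_0)|\le c\sqrt{\mu_n})\le 1-c'\min(\mu_n\delta n,1)$; tensorizing over the $n$ rows and using the operator-norm bound to pass from a net of $\delta n$-sparse vectors (of size $e^{O(\delta n\log(1/\delta))}$) to all of $\mathrm{Comp}$ gives $\textbf{P}(\inf_{\boldsymbol{w}\in\mathrm{Comp}}\|M_n\boldsymbol{w}\|_2\le c\sqrt{\mu_n n})\le e^{-c\mu_n n}\le e^{-cn^{0.55}}$, once $\delta$ is small enough to beat the net entropy. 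Since $0.55>0.0001$, this is far below $\eta\,n^{3/2}/\sqrt{\mu_n}$ whenever $\eta\ge 2^{-n^{0.0001}}$, so the compressible part is harmless --- and this is exactly where the hypotheses enter, since in the sparse regime even a $1$-sparse test vector forces singularity probability only $e^{-\Theta(\mu_n n)}$.

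For incompressible vectors, the ``invertibility via distance'' lemma \cite{MR2407948} gives, with $H_k=\mathrm{span}(X_i:i\ne k)$ and by exchangeability of the rows,
\[
\textbf{P}\!\left(\inf_{\boldsymbol{w}\in\mathrm{Incomp}(\delta,\rho)}\|M_n\boldsymbol{w}\|_2\le\eta\right)\le\frac1{\delta n}\sum_{k=1}^n\textbf{P}\!\left(\mathrm{dist}(X_k,H_k)\le\frac{\eta\sqrt n}{\rho}\right)=\frac1\delta\,\textbf{P}\!\left(\mathrm{dist}(X_n,H_n)\le\frac{\eta\sqrt n}{\rho}\right).
\]
Taking $\boldsymbol{v}=\boldsymbol{v}(X_1,\dots,X_{n-1})\in\mathbb{S}^{n-1}$ to be a unit normal of $H_n$, which is independent of $X_n$, we have $\mathrm{dist}(X_n,H_n)=|\langle X_n,\boldsymbol{v}\rangle|$, so everything reduces to controlling the averaged small-ball quantity $\textbf{E}_{\boldsymbol{v}}\bigl[\sup_{a\in\mathbb{R}}\textbf{P}(|\langle X_n,\boldsymbol{v}\rangle-a|\le t)\bigr]$ at $t=\eta\sqrt n/\rho$.

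The crux is an inverse Littlewood--Offord statement for $\boldsymbol{v}$, obtained by adapting the counting framework of \cite{MR4273471,MR4282089} to $\mu_n$-lazy walks: a dichotomy asserting that either $\boldsymbol{v}$ lies in a small, explicitly enumerable family of ``structured'' vectors carrying a large obstruction in a $\mu_n$-adapted essential least common denominator --- an event of probability at most $e^{-c\mu_n n}$ for the random normal vector --- or else $\boldsymbol{v}$ is incompressible and unstructured, in which case $\sup_a\textbf{P}(|\langle X_n,\boldsymbol{v}\rangle-a|\le t)\lesssim t/\sqrt{\mu_n}+e^{-c\mu_n n}$ for every $t\ge 2^{-n^{0.0001}}$. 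The heuristic behind the second alternative is that a generic incompressible $\boldsymbol{v}$ has no nontrivial near-cancellations among its subset sums, so its atom probability collapses from the crude $(\mu_n n)^{-1/2}$ down to $e^{-\Theta(\mu_n n)}$, while the extra mass at scale $t$ is just the Gaussian scale $t/\sqrt{\mu_n}$ of $Y_{\mu_n}(\boldsymbol{v})$. Plugging this into the previous display with $t=\eta\sqrt n/\rho$,
\[
\textbf{P}\!\left(\inf_{\boldsymbol{w}\in\mathrm{Incomp}}\|M_n\boldsymbol{w}\|_2\le\eta\right)\lesssim\frac1\delta\left(\frac{\eta\sqrt n}{\rho\sqrt{\mu_n}}+e^{-c\mu_n n}\right),
\]
and since $1/\delta$ and $1/\rho$ are polynomially bounded in $n$ (here $\mu_n\ge n^{-0.45}$ keeps them below $n$) while $e^{-c\mu_n n}\ll\eta$, the right-hand side is $\lesssim\eta\,n^{3/2}/\sqrt{\mu_n}$; combined with the compressible estimate this proves the theorem.

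The difficulty is concentrated entirely in the third step. One must re-derive the inverse Littlewood--Offord / integer-point counting estimates of \cite{MR4273471,MR4282089} with explicit $\mu_n$-dependence, verify that the structured vectors are genuinely rare for the random normal vector of a \emph{sparse} matrix, and --- most delicately --- check that the exponential gain one extracts really sits at the scale $e^{-\Theta(\mu_n n)}$: this is what forces $\mu_n\ge n^{-0.45}$, so that $e^{-c\mu_n n}$ beats the net entropy and all the polynomial prefactors, and $\eta\ge 2^{-n^{0.0001}}$, so that $e^{-c\mu_n n}$ beats $\eta$ itself --- all while keeping the polynomial losses within the budget $n^{3/2}/\sqrt{\mu_n}$.
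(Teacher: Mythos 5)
Your outline follows the Rudelson--Vershynin route (compressible/incompressible split, then the distance-to-hyperplane reduction, then anti-concentration for $\langle X_n,\boldsymbol v\rangle$ where $\boldsymbol v$ is the random unit normal of $H_n$), which is genuinely different from the paper's: the paper splits $\mathbb{S}^{n-1}$ by the size of $\operatorname{LCD}_{\gamma,\alpha}$ against the threshold $n^{3/4}\sqrt{\mu_n}\,\eta^{-1}$, handles large-LCD vectors by a leave-one-row-out argument plus the $\mu_n$-lazy L\'evy bound (Lemma \ref{lcd}), and handles small-LCD vectors by approximating them with integer vectors in $[-2\eta^{-1}n^{3/4}\sqrt{\mu_n},2\eta^{-1}n^{3/4}\sqrt{\mu_n}]^n$, embedding these injectively in $\mathbb{F}_p^n$ with $p=2^{n^{0.001}}$, splitting by support size at $n^{0.99}\mu_n$ and by the level parameter $t$, and running a union bound with the $\mu_n$-lazy Hal\'asz inequality (Theorem \ref{Hal}) and the counting lemma (Lemma \ref{num}). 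Crucially, the paper never needs any structural information about the random normal vector.

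That difference is where your gap sits. The entire incompressible part of your argument rests on an unproved dichotomy for the random normal $\boldsymbol v$: that the ``structured'' event has probability at most $e^{-c\mu_n n}$, and that otherwise $\sup_a\textbf{P}(|\langle X_n,\boldsymbol v\rangle-a|\le t)\lesssim t/\sqrt{\mu_n}+e^{-c\mu_n n}$. This is not a step one can discharge by ``adapting the counting framework'' of Ferber--Jain--Luh--Samotij and Jain: those results count structured coefficient vectors in $\mathbb{F}_p^n$ for fixed (non-sparse) steps, and converting such counts into a probability bound for the normal of a \emph{sparse} matrix, with the exponent at the delicate scale $e^{-\Theta(\mu_n n)}$ (the same scale as zero-row/zero-column events), is precisely the hard content of the theorem --- it is what the paper's Theorem \ref{Hal}, Lemma \ref{lem47}, Lemma \ref{lem48} and the $\boldsymbol W_t$ decomposition are built to replace. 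Your claimed unstructured bound is also stronger than what an LCD-type estimate yields at any realistic structure threshold (the paper's Lemma \ref{lcd} with $\alpha=n^{1/4}$ only gives an additive error $e^{-2\mu_n\sqrt n}$, which suffices because $\eta\ge 2^{-n^{0.0001}}$), so you are implicitly demanding an even stronger structure theorem than needed. Two smaller points: your per-row small-ball estimate on the compressible part fails as stated for very sparse unit vectors (for $\boldsymbol w_0$ a standard basis vector the left-hand side is $1-\mu_n$, not $1-c'$), although the final $e^{-c\mu_n n}$ compressible bound is of the right order; and the budget check $\delta^{-1}\rho^{-1}\eta\sqrt{n/\mu_n}\lesssim\eta n^{3/2}/\sqrt{\mu_n}$ does go through with $\delta\asymp\mu_n/\log(1/\mu_n)$ and $\rho$ constant, so the arithmetic is not the problem --- the missing inverse Littlewood--Offord statement for the random normal is.
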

\begin{myrem}
We have not tried to optimize the constant 0.0001 in the above theorem, and similarly, for $\mu_n$, which characterizes the sparsity of sparse random matrices, it can be made more refined using an approach similar to that of this paper to make it closer to (but not lower than) $\frac{1}{\sqrt{n}}$.
\end{myrem}

\begin{myrem}
Since few nonzero elements exist for sparse random linear combinations, studying the famous inverse Littlewood–Offord theory is challenging. In this paper, we not only show that the inverse Littlewood-Offord theory under a sparse $\mu_n-$ lazy distribution retains the core conclusion of "high concentration $\Rightarrow$ strong structure" but also reveal how sparsity $\mu_n$ affects the small ball probability and the smallest singular value.
\end{myrem}

The remainder of this paper is organized as follows. Section \ref{sec2} first recalls some basic concepts and lists some concentration inequalities for classical random variables and extends them to general $\mu-$lazy random variables. Section \ref{sec3} gives the proof of the main results, and Section \ref{sec4} presents a discussion of the main approach and future work of this paper.

Throughout the paper, we use the notation $[n]:= \{1, \dots, n\}$. We will also use the following asymptotic notation: If $a = \Omega(b)$, we write $b \lesssim a$ or $a \gtrsim b$; If $a = o(b)$, we write $b \gg a$ or $a \ll b$.

\section{Preliminaries}\label{sec2}
In this section, we introduce some lemmas that will be used to prove our
main result. The main purpose is to extend some classical inequalities to the sparse case.
\par
First, we introduce an important notion defined by Rudelson and Vershynin. The least common denominator (LCD) of a nonzero vector $\boldsymbol{a} \in \mathbb{R}^n$ can be defined as
\begin{align*}
\operatorname{LCD}_{\gamma, \alpha}(\boldsymbol{a}):=\inf \left\{\theta>0: \operatorname{dist}\left(\theta \boldsymbol{a}, \mathbb{Z}^n\right) < \min \left\{\gamma\|\theta \boldsymbol{a}\|_2, \alpha\right\}\right\},
\end{align*}
for some parameters $\gamma \in(0,1)$ and $\alpha>0$.
\par
The following lemma, which appears in \cite{MR2407948}, shows how the LCD of a vector acts on its L\'{e}vy concentration function. Specifically, it shows that vectors with large LCD have small L\'{e}vy concentration function on scales which are larger than $\Omega(1 / \mathrm{LCD})$. Here, we reproduce a particularly simple proof for the $\mu-$lazy case from the lecture notes \cite{rudelson2013recentdevelopmentsnonasymptotictheory};
\begin{lemma}\label{lcd}
Let $\xi_1, \ldots, \xi_n$ denote i.i.d. $\mu-$lazy random variables, $0<\mu_{n}\leq1/2$. Consider a unit vector $\boldsymbol{a}=\left(a_1, \ldots, a_n\right) \in \mathbb{S}^{n-1}$. Let
\begin{align*}
S:=\sum_{i=1}^n \xi_i a_i.
\end{align*}
Then, for every $\alpha>0$, and for
\begin{align*}
\delta \geq \frac{(2 / \pi)}{\operatorname{LCD}_{\gamma, \alpha}(\boldsymbol{a})},
\end{align*}
we have
\begin{align*}
\mathcal{L}(S, \delta) \lesssim \frac{\delta}{\sqrt{\mu_{n}}\gamma}+\exp \left(-2\mu_{n}\alpha^2\right).
\end{align*}
\end{lemma}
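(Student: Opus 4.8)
The plan is to run the Fourier-analytic argument of Rudelson and Vershynin, tracking how the sparsity parameter $\mu_n$ propagates. Write $\mathcal{L}(S,\delta)=\sup_{v\in\mathbb{R}}\textbf{P}(|S-v|\le\delta)$ for the L\'{e}vy concentration function and $\phi_S(t)=\mathrm{E}\,e^{itS}$ for the characteristic function. The starting point is Esseen's inequality: there is an absolute constant $C$ with
\[
\mathcal{L}(S,\delta)\;\le\;C\,\delta\int_{-1/\delta}^{1/\delta}\bigl|\phi_S(t)\bigr|\,dt .
\]
By independence $\phi_S(t)=\prod_{i=1}^{n}\mathrm{E}\,e^{it\xi_i a_i}$, and for a $\mu$-lazy variable one computes $\mathrm{E}\,e^{it\xi a}=(1-\mu)+\mu\cos(ta)=1-\mu\bigl(1-\cos(ta)\bigr)$. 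Since $1-\cos(\cdot)\in[0,2]$ and $\mu_n\le 1/2$, every factor lies in $[0,1]$, so by $1-x\le e^{-x}$,
\[
|\phi_S(t)|\;=\;\prod_{i=1}^n\bigl(1-\mu_n(1-\cos(ta_i))\bigr)\;\le\;\exp\!\Bigl(-\mu_n\sum_{i=1}^n\bigl(1-\cos(ta_i)\bigr)\Bigr).
\]
This is the only place the sparsity enters, and it is precisely this surviving factor $\mu_n$ that becomes the $1/\sqrt{\mu_n}$ in the conclusion; for $\mu_n=1$ one recovers the Rademacher bound of \cite{MR2407948}.

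The second step is to bound the exponent below using the LCD. The elementary estimate $|\sin u|\ge\tfrac{2}{\pi}\operatorname{dist}(u,\pi\mathbb{Z})$ yields $1-\cos\theta\ge 8\operatorname{dist}(\theta/2\pi,\mathbb{Z})^2$, hence
\[
\sum_{i=1}^n\bigl(1-\cos(ta_i)\bigr)\;\ge\;8\,\operatorname{dist}\!\Bigl(\tfrac{t}{2\pi}\boldsymbol{a},\ \mathbb{Z}^n\Bigr)^2 .
\]
For $|t|\le 1/\delta$ the quantity $\theta':=|t|/2\pi$ is at most $1/(2\pi\delta)$, which is strictly smaller than $\operatorname{LCD}_{\gamma,\alpha}(\boldsymbol{a})$ by the hypothesis $\delta\ge(2/\pi)/\operatorname{LCD}_{\gamma,\alpha}(\boldsymbol{a})$ (equivalently $\operatorname{LCD}_{\gamma,\alpha}(\boldsymbol{a})\ge 2/(\pi\delta)$). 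By the definition of the least common denominator together with $\|\boldsymbol{a}\|_2=1$, this forces $\operatorname{dist}(\theta'\boldsymbol{a},\mathbb{Z}^n)\ge\min\{\gamma\theta',\alpha\}$, so that for all $|t|\le 1/\delta$,
\[
|\phi_S(t)|\;\le\;\exp\!\Bigl(-8\mu_n\min\bigl\{\gamma^2t^2/(4\pi^2),\ \alpha^2\bigr\}\Bigr).
\]

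Finally I would insert this into Esseen's integral and split according to which term attains the minimum, using $\exp(-8\mu_n\min\{A,B\})\le e^{-8\mu_nA}+e^{-8\mu_nB}$. The first piece is a Gaussian integral, $\int_{\mathbb{R}}\exp(-2\mu_n\gamma^2t^2/\pi^2)\,dt=\pi^{3/2}/(\gamma\sqrt{2\mu_n})\lesssim 1/(\gamma\sqrt{\mu_n})$, which after multiplying by $C\delta$ produces the term $\delta/(\gamma\sqrt{\mu_n})$; the second piece contributes at most $C\delta\cdot(2/\delta)\,e^{-8\mu_n\alpha^2}\lesssim e^{-2\mu_n\alpha^2}$. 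Summing gives $\mathcal{L}(S,\delta)\lesssim \delta/(\gamma\sqrt{\mu_n})+\exp(-2\mu_n\alpha^2)$, as claimed. I do not anticipate a genuine obstacle here: the scheme is routine once the characteristic function of a $\mu$-lazy combination is in hand. The two points that need a little care are (a) the hypothesis $\mu_n\le 1/2$, which is exactly what keeps each factor of $\phi_S$ nonnegative and hence legitimizes the single exponential bound above, and (b) bookkeeping the numerical constants — Esseen's constant, the $8$ from the $1-\cos$ estimate, and the $2/\pi$ in the LCD threshold — so that the tail exponent comes out as $2\mu_n\alpha^2$; any constant at most $8$ works, so there is room to spare.
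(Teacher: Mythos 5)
Your proposal is correct and follows essentially the same route as the paper: Ess\'{e}en's inequality applied to the characteristic function $\prod_j\bigl(1-\mu_n(1-\cos(ta_j))\bigr)$, the bound $1-\cos\theta\ge 8\operatorname{dist}(\theta/2\pi,\mathbb{Z})^2$, the LCD hypothesis to force $\operatorname{dist}(\theta'\boldsymbol{a},\mathbb{Z}^n)\ge\min\{\gamma\theta',\alpha\}$ on the integration range, and the split into a Gaussian integral giving $\delta/(\gamma\sqrt{\mu_n})$ plus the tail term $e^{-2\mu_n\alpha^2}$. The only differences are cosmetic (you integrate $\phi_S$ over $[-1/\delta,1/\delta]$ instead of rescaling $S/\delta$ onto $[-2,2]$, and your numerical constants in the exponent are slightly stronger), so no changes are needed.
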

\begin{proof}
Using Ess\'{e}en's inequality\footnote{It can be found in \cite{MR205297}.}, we can use the characteristic function to bound L\'{e}vy concentration function:
\begin{align*}
\mathcal{L}(X, 1) \lesssim \int_{-2}^2|\textbf{E}[\exp (i \theta X)]| d \theta.
\end{align*}
Then, we have
\begin{align*}
\mathcal{L}(S, \delta) & =\mathcal{L}(S / \delta, 1) \\
& \lesssim \int_{-2}^2|\textbf{E}[\exp (i \theta S / \delta)]| d \theta \\
& =\int_{-2}^2 \prod_{j=1}^n\left|\textbf{E}\left[\exp \left(i a_j \xi_j \theta / \delta\right)\right]\right| d \theta \\
& =\int_{-2}^2 \prod_{j=1}^n\left|1-\mu_{n}+\mu_{n}\cos \left(a_j \theta / \delta\right)\right| d \theta \\
& \leq \int_{-2}^2 \prod_{j=1}^n \exp \left(-2\mu_{n}\sin ^2\left(a_j \theta / 2\delta\right)\right) d \theta \\
& \leq \int_{-2}^2 \prod_{j=1}^n \exp \left(-2\mu_{n} \min _{q \in \mathbb{Z}}\left|\frac{ \theta}{\pi \delta} a_j-q\right|^2\right) d \theta,
\end{align*}
where the second inequality uses a deflation similar to that of (\ref{Hal6}), and in the last line, we have used the inequality $|\sin (x)| \geq \min _{q \in \mathbb{Z}}\left|\frac{2}{\pi} x-q\right|$. Then, let
\begin{align*}
h(\theta):=\min _{\boldsymbol{p} \in \mathbb{Z}^n}\left\|\frac{ \theta}{\pi \delta} \boldsymbol{a}-\boldsymbol{p}\right\|_2,
\end{align*}
we have
\begin{align*}
\mathcal{L}(S, \delta) \lesssim \int_{-2}^2 \exp \left(-2\mu_{n} h^2(\theta)\right) d \theta.
\end{align*}
for any $\theta \in[-2,2]$, let $s=\frac{\theta}{\pi\delta}\in[-\frac{2}{\pi\delta},\frac{2}{\pi\delta}]$, since, by assumption, $2 /(\pi \delta) \leq \operatorname{LCD}_{\gamma, \alpha}(\boldsymbol{a})$, we obtain
\begin{align*}
h(\theta)= \min _{\boldsymbol{p} \in \mathbb{Z}^n}\left\|s \boldsymbol{a}-\boldsymbol{p}\right\|_2\geq \min \left(\gamma s\|\boldsymbol{a}\|_2, \alpha\right)=\min \left(\gamma s, \alpha\right),
\end{align*}
so that
\begin{align*}
\mathcal{L}(S, \delta) & \lesssim \int_{-2}^2\left(\exp \left(-2\mu_{n}( \gamma \theta / \pi \delta)^2\right)+\exp \left(-2\mu_{n}\alpha^2\right)\right) d \theta \\
& \lesssim \frac{\delta}{\sqrt{\mu_{n}}\gamma}+\exp \left(-2\mu_{n}\alpha^2\right).
\end{align*}
\end{proof}

The next inequality given is an extension of Hal\'{a}sz's inequality \cite{MR494478} on $\mathbb{F}_p$ with respect to $\mu_n-$lazy random variables. Before the proof, we need to introduce a quantity that describes the structure of the vector.
\par
Suppose that $\boldsymbol{a} \in \mathbb{F}_p^n$ for an integer $n$ and a prime $p$ and let $k \in \mathbb{N}$. For every $\beta \in[0,1]$, we define $R_k^\beta(a)$ as the number of solutions to
\begin{align*}
\pm a_{i_1} \pm a_{i_2} \cdots \pm a_{i_{2 k}}=0 \quad \bmod p,
\end{align*}
that satisfy $\left|\left\{i_1, \ldots, i_{2 k}\right\}\right| \geqslant(1+\beta) k$. Then we let $R_k(\boldsymbol{a})$ denote the number of solutions to $\pm a_{i_1} \pm a_{i_2} \cdots \pm a_{i_{2 k}}=0 \bmod p$, where repetitions are allowed in the choice of $\left\{i_1, \ldots, i_{2 k}\right\}\in[n]$, it is easily seen that $R_k(\boldsymbol{a})$ cannot be much larger than $R_k^\beta(\boldsymbol{a})$. This is formalized in the following simple lemma.

\begin{lemma}[Lemma 1.6 in \cite{MR4273471}]\label{Ra1}
For all integers $k$ and $n$ with $k \leqslant n / 2$, any prime $p$, vector $\boldsymbol{a}:=\left(a_1, \ldots, a_n\right) \in \mathbb{F}_p^n$, and $\beta \in$ $[0,1]$ we have
\begin{align*}
R_k(\boldsymbol{a}) \leqslant R_k^\beta(\boldsymbol{a})+\left(40 k^{1-\beta} n^{1+\beta}\right)^k.
\end{align*}
\end{lemma}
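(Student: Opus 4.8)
The plan is to prove Lemma~\ref{Ra1} by a direct enumeration of the ``degenerate'' solutions, using the equation $\pm a_{i_1}\pm\cdots\pm a_{i_{2k}}\equiv 0\pmod p$ only to organize the counting and then bounding crudely by ignoring both it and the prime $p$ entirely. Write a solution as a pair $\bigl((i_1,\dots,i_{2k}),(\varepsilon_1,\dots,\varepsilon_{2k})\bigr)\in[n]^{2k}\times\{-1,1\}^{2k}$ with $\sum_t\varepsilon_t a_{i_t}\equiv 0\pmod p$. By definition $R_k^\beta(\boldsymbol a)$ counts exactly those solutions whose index set $\{i_1,\dots,i_{2k}\}$ has size at least $(1+\beta)k$, so $R_k(\boldsymbol a)-R_k^\beta(\boldsymbol a)$ is precisely the number of solutions with $\bigl|\{i_1,\dots,i_{2k}\}\bigr|<(1+\beta)k$. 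Dropping the linear relation and summing over the $2^{2k}$ sign patterns, it then suffices to bound the number $N$ of index tuples $(i_1,\dots,i_{2k})\in[n]^{2k}$ using at most $\lceil(1+\beta)k\rceil-1$ distinct values, and to verify $2^{2k}N\le\bigl(40\,k^{1-\beta}n^{1+\beta}\bigr)^k$.

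To estimate $N$ I would classify a tuple by the number $j$ of distinct values it takes. Reading the tuple from left to right, $j$ of its $2k$ positions are ``fresh'' (introduce a value not seen before) and the remaining $2k-j$ are ``repeats''; the tuple is then determined by the set of fresh positions ($\binom{2k}{j}$ choices), by the distinct values placed there in order of first appearance ($n(n-1)\cdots(n-j+1)\le n^j$ choices), and, for each repeat position, by a pointer to which of the at most $j$ already-seen values it equals ($\le j^{\,2k-j}$ choices). Hence $N\le\sum_{j=1}^{\lceil(1+\beta)k\rceil-1}\binom{2k}{j}\,n^j\,j^{\,2k-j}$, and since $j<(1+\beta)k$ forces $2k-j>(1-\beta)k$, this is exactly where the exponents $1+\beta$ and $1-\beta$ of the target come from. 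Bounding $\binom{2k}{j}\le 2^{2k}$ and noting that, under the hypothesis $k\le n/2$, the function $j\mapsto n^j j^{\,2k-j}$ is nondecreasing on $[1,2k]$, the sum is at most $2k\cdot 2^{2k}$ times its value at the top of the range, which is at most $n^{(1+\beta)k}\bigl((1+\beta)k\bigr)^{(1-\beta)k}=(1+\beta)^{(1-\beta)k}\,n^{(1+\beta)k}k^{(1-\beta)k}$. Assembling, this gives $R_k(\boldsymbol a)-R_k^\beta(\boldsymbol a)\le 2k\,\bigl[16(1+\beta)^{1-\beta}\bigr]^{k}\,n^{(1+\beta)k}k^{(1-\beta)k}$.

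The only point that calls for care is absorbing the constant, i.e.\ checking $2k\,[16(1+\beta)^{1-\beta}]^{k}\le 40^{k}$ for all $\beta\in[0,1]$ and all $k\ge1$. This reduces to the elementary one-variable estimate $(1+\beta)^{1-\beta}\le\tfrac54$ on $[0,1]$ (the function $(1-\beta)\ln(1+\beta)$ is maximized near $\beta\approx0.35$), after which $16(1+\beta)^{1-\beta}\le 20$ and the claim follows from $2k\le 2^{k}$. I expect this constant-chasing to be the main (and essentially the only) obstacle: a wasteful handling of either the binomial factor or the sum over $j$ would push the constant above $40$, so the bounds must be kept reasonably sharp. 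No step uses the prime $p$ or any arithmetic property of $\boldsymbol a$ — Lemma~\ref{Ra1} is a pure counting inequality.
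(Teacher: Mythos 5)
Your proof is correct. Writing a solution as an (index tuple, sign pattern) pair matches the convention implicit in the paper (the normalization $2^{2k}n^{2k}$ in Theorem \ref{Hal}), so $R_k(\boldsymbol a)-R_k^\beta(\boldsymbol a)$ is indeed the number of such pairs whose index set has fewer than $(1+\beta)k$ distinct elements; your fresh/repeat decomposition gives at most $\binom{2k}{j}n^j j^{2k-j}$ tuples with exactly $j$ distinct values, the monotonicity of $j\mapsto n^j j^{2k-j}$ on $[1,2k]$ is legitimate precisely because $2k\le n$, and the constants do close: $(1+\beta)^{1-\beta}\le \tfrac54$ on $[0,1]$ (the maximum of $(1-\beta)\ln(1+\beta)$ is about $0.205$, near $\beta\approx 0.46$ rather than $0.35$, but the bound holds) and $2k\le 2^k$, so $2k\cdot 2^{2k}\cdot 2^{2k}\cdot\bigl(\tfrac54\bigr)^k n^{(1+\beta)k}k^{(1-\beta)k}\le\bigl(40k^{1-\beta}n^{1+\beta}\bigr)^k$. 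For comparison: the paper gives no proof of Lemma \ref{Ra1} at all — it is quoted verbatim from Ferber, Jain, Luh and Samotij — and the argument there is the same elementary count of degenerate tuples (they fix the small set of admissible indices first and map all $2k$ positions into it, roughly $2^{2k}\binom{n}{m}m^{2k}$ with $m<(1+\beta)k$, whereas you organize by first occurrences); both routes ignore the congruence and the prime $p$ entirely, so your write-up is a correct, self-contained verification of the cited lemma rather than a genuinely different method.
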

In the remainder of the paper we define $R_k^*(\boldsymbol{a})=R_k^{0.01}(\boldsymbol{a})$.
\begin{Theorem}\label{Hal}
There exists a constant $C$ such that the following holds for every odd prime $p$, integer $n$, and vector $\boldsymbol{a}:=\left(a_1, \ldots, a_n\right) \in \mathbb{F}_p^n \backslash\{\mathbf{0}\}$. Suppose that an integer $0 \leq k \leq n / 2$ and positive real $M$ satisfy
\begin{align*}
30 M \leq|\operatorname{supp}(\boldsymbol{a})| \quad \text { and } \quad 80 k M \leq n .
\end{align*}
Then,
\begin{align}\label{inverse1}
\rho^{(\mu_{n})}_{\mathbb{F}_p}(\boldsymbol{a}) \leq \frac{1}{p}+\frac{C R_k^*(\boldsymbol{a})+C\left(40 k^{0.99} n^{1.01}\right)^k}{2^{2 k} n^{2 k} \sqrt{\mu_{n} M}}+e^{-16\mu_{n} M} .
\end{align}
Where, $\rho^{(\mu_{n})}_{\mathbb{F}_p}(\boldsymbol{a})$ denotes the largest atom probability of $\boldsymbol{a}$ over $\mathbb{F}_p$, $0<\mu_{n}\leq1/2$.
\end{Theorem}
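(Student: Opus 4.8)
The plan is to combine a Fourier expansion over $\mathbb{F}_p$ with the counting quantity $R_k$, in the spirit of the approaches of Ferber--Jain--Luh--Samotij and of Jain, keeping careful track of the sparsity parameter $\mu_n$. Write $e_p(x):=e^{2\pi ix/p}$. By orthogonality of characters on $\mathbb{F}_p$, for each $x$ we have $\textbf{P}\big(\sum_j\eta^{\mu_n}_ja_j\equiv x\big)=\frac1p\sum_{t\in\mathbb{F}_p}e_p(-tx)\varphi(t)$, where $\varphi(t):=\prod_{j=1}^n\big(1-\mu_n+\mu_n\cos(2\pi ta_j/p)\big)=\prod_{j=1}^n\big(1-2\mu_n\sin^2(\pi ta_j/p)\big)$; since $\mu_n\le 1/2$ every factor lies in $[0,1]$, so $\rho^{(\mu_n)}_{\mathbb{F}_p}(\boldsymbol a)\le\frac1p+\frac1p\sum_{t\neq0}\varphi(t)$, and $1-u\le e^{-u}$ gives $\varphi(t)\le\exp(-2\mu_n\Sigma(t))$ with $\Sigma(t):=\sum_{j\in I}\sin^2(\pi ta_j/p)$, $I:=\operatorname{supp}(\boldsymbol a)$. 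Two facts about $\Sigma$ are used. (a) Expanding $\big(2\sum_j\cos(2\pi ta_j/p)\big)^{2k}$ and summing over $t$ yields the moment identity $\sum_{t\in\mathbb{F}_p}(2S(t))^{2k}=pR_k(\boldsymbol a)$, where $S(t):=\sum_j\cos(2\pi ta_j/p)=n-2\Sigma(t)$; hence for $0\le\tau<n/2$ the sublevel count $\widetilde F(\tau):=|\{t\neq0:\Sigma(t)\le\tau\}|$ satisfies $\widetilde F(\tau)(n-2\tau)^{2k}\le pR_k(\boldsymbol a)/2^{2k}$, and $80kM\le n$ gives $(n-2\tau)^{2k}\ge e^{-1/2}n^{2k}$ for $\tau\le 8M$, so $\widetilde F(\tau)\le e^{1/2}\,pR_k(\boldsymbol a)/(2^{2k}n^{2k})$ on $[0,8M]$. (b) From $|\sin(\alpha\pm\beta)|\le|\sin\alpha|+|\sin\beta|$ one gets the approximate stability $\Sigma(t\pm t')\le 2\Sigma(t)+2\Sigma(t')$.

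The crux is to improve (a) to a bound on $\widetilde F(\tau)$ that decays like $\sqrt{\tau}$ for small $\tau$; this is where the support hypothesis $30M\le|I|$ enters. Put $A_\tau:=\{t:\Sigma(t)\le\tau\}$, so $0\in A_\tau$ and $|A_\tau|=\widetilde F(\tau)+1$. By (b), $A_\tau-A_\tau\subseteq A_{4\tau}$, while Cauchy--Davenport gives $|A_\tau-A_\tau|\ge\min(p,2|A_\tau|-1)$; hence as long as $|A_\tau|\le(p+1)/2$ one has the doubling $\widetilde F(4\tau)\ge 2\widetilde F(\tau)$. Iterate from a scale $\tau\le 8M$, quadrupling until the scale reaches $\asymp 8M$. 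If $\widetilde F$ stays $<p/2$ throughout, the doublings compound to $\widetilde F(\tau)\le\sqrt{\tau/M}\,\widetilde F(8M)\lesssim\sqrt{\tau/M}\cdot pR_k(\boldsymbol a)/(2^{2k}n^{2k})$ --- the needed $\sqrt\tau$-growth. Otherwise $\widetilde F$ reaches $p/2$ at some $\tau_1\le 8M$; then $|A_{\tau_1}|>p/2$ forces $A_{\tau_1}-A_{\tau_1}=\mathbb{F}_p$, so $A_{4\tau_1}=\mathbb{F}_p$, i.e. $\Sigma(t)\le 4\tau_1$ for all $t$; averaging, $p|I|/2=\sum_t\Sigma(t)\le 4\tau_1 p$, so $\tau_1\ge|I|/8\ge 4M$. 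In this second branch the moment bound at $\tau_1$ forces $R_k(\boldsymbol a)/(2^{2k}n^{2k})\gtrsim 1$, and the layer-cake computation below (using $\tau_1\gtrsim M$) gives $\frac1p\sum_{t\neq0}\varphi(t)\lesssim(\mu_nM)^{-1/2}+e^{-7.5\mu_nM}$, which is absorbed into the main term; so it suffices to treat the first branch.

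Finally assemble by the layer-cake formula. Split off $\Sigma(t)>8M$ (where $\varphi(t)\le e^{-16\mu_nM}$) to get $\sum_{t\neq0}\varphi(t)\le\sum_{0<\Sigma(t)\le 8M}e^{-2\mu_n\Sigma(t)}+pe^{-16\mu_nM}$, and $\sum_{0<\Sigma(t)\le 8M}e^{-2\mu_n\Sigma(t)}=2\mu_n\int_0^{8M}e^{-2\mu_n\tau}\widetilde F(\tau)\,d\tau+\widetilde F(8M)e^{-16\mu_nM}$. Inserting $\widetilde F(\tau)\lesssim\sqrt{\tau/M}\cdot pR_k(\boldsymbol a)/(2^{2k}n^{2k})$ and using $\int_0^\infty e^{-2\mu_n\tau}\sqrt\tau\,d\tau=\tfrac{\sqrt\pi}{2}(2\mu_n)^{-3/2}$, the main term is $\lesssim pR_k(\boldsymbol a)/(2^{2k}n^{2k}\sqrt{\mu_nM})$; dividing by $p$ and using $\widetilde F(8M)\le p$ yields $\rho^{(\mu_n)}_{\mathbb{F}_p}(\boldsymbol a)\lesssim\frac1p+\frac{R_k(\boldsymbol a)}{2^{2k}n^{2k}\sqrt{\mu_nM}}+e^{-16\mu_nM}$. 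Then Lemma~\ref{Ra1} with $\beta=0.01$, i.e. $R_k(\boldsymbol a)\le R_k^{*}(\boldsymbol a)+(40k^{0.99}n^{1.01})^k$, gives the stated inequality, the constant $C$ absorbing all implicit constants (and the inessential factor in front of the exponential).

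The hard part is making every constant absolute, which forces the three hypotheses to interlock precisely: $80kM\le n$ is exactly what keeps the moment bound (a) lossless up to a universal factor over the whole window $[0,8M]$; $30M\le|I|$ is exactly what renders the ``$\widetilde F$ hits $p/2$'' branch vacuous, via the averaging identity $\sum_t\Sigma(t)=p|I|/2$; and truncating at $8M$ (together with $\mu_nn\ge 80\mu_nM$) keeps the exponential loss at $e^{-16\mu_nM}$ rather than a weaker rate. The remaining work is the routine but fiddly bookkeeping relating $\sin^2$, distance to $\mathbb{Z}$, and character sums, and verifying the doubling iteration with explicit constants.
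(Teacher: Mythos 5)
Your proposal is correct, and its skeleton is the same as the paper's: expand the atom probability in characters of $\mathbb{F}_p$, bound the $\mu_n$-lazy factor by $\exp(-2\mu_n\sin^2(\cdot))$, control the sublevel sets of $t\mapsto\sum_j\sin^2(\pi t a_j/p)$, integrate via the layer-cake formula (the only place $\mu_n$ enters, producing the $(\mu_n M)^{-1/2}$ and $e^{-16\mu_n M}$ terms), and finish with Lemma \ref{Ra1} at $\beta=0.01$. The genuine difference is the middle step: the paper imports the sublevel-set estimates (\ref{Hal4}) from Ferber--Jain--Luh--Samotij as a black box, whereas you reprove them --- the bound at the top scale via the moment identity $\sum_t (2S(t))^{2k}=pR_k(\boldsymbol a)$ combined with $80kM\le n$, and the $\sqrt{\tau}$-growth via Cauchy--Davenport doubling applied to $A_\tau-A_\tau\subseteq A_{4\tau}$, with the degenerate branch controlled (or eliminated) by the averaging identity $\sum_t\Sigma(t)=|\operatorname{supp}(\boldsymbol a)|\,p/2$ and the hypothesis $30M\le|\operatorname{supp}(\boldsymbol a)|$. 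This buys a self-contained argument that makes explicit where both hypotheses are used (in the paper they enter only through the quoted condition $t\le 2M\le|\operatorname{supp}(\boldsymbol a)|/15$), at the cost of redoing work the paper obtains by citation; your doubling mechanism is a mild variant of the Hal\'{a}sz-type iterated-sumset argument underlying the cited estimates.

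Minor bookkeeping: (i) your two tail contributions give $2e^{-16\mu_n M}$ rather than $e^{-16\mu_n M}$; merge them using $\widetilde F(8M)+\#\{t:\Sigma(t)>8M\}\le p$ to match the stated coefficient, or let $C$ absorb it. (ii) The averaging bound gives $\tau_1\ge|\operatorname{supp}(\boldsymbol a)|/8\ge 3.75M$, not $4M$; harmless, since you only use $\tau_1\gtrsim M$. (iii) As your closing paragraph hints, if you apply the doubling only at scales $\le 2M$ (which is all the iteration needs, because for $\tau\in(2M,8M]$ monotonicity already yields the $\sqrt{\tau/M}$ bound), then the ``$\widetilde F$ hits $p/2$'' branch would force $|\operatorname{supp}(\boldsymbol a)|\le 16M$, contradicting $30M\le|\operatorname{supp}(\boldsymbol a)|$, so the absorption argument in your second paragraph can be dropped entirely.
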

\begin{myrem}
Comparing with the non-sparse form with $\mu_n=1$, although the second term on the right-hand side of the (\ref{inverse1}) needs to be multiplied by $\frac{1}{\sqrt{\mu_n}}$, note that if $\rho^{(\mu_{n})}_{\mathbb{F}_p}(\boldsymbol{a})$ is large, then $R_k^*(\boldsymbol{a})$ must be large. Thus, we have the following conclusion:
\par
Although sparsity has some effect, $\mu_n-$lazy random variables still have the key conclusion of "high concentration $\Rightarrow$ strong structure". Thus, this Hal\'{a}sz’s type inequality may be viewed as a partial inverse Littlewood–Offord theorem. 
\end{myrem}

\begin{myrem}
With the division of the LCD and the study of the corresponding L\'{e}vy concentration function, we can transform the study of vectors on the unit sphere $\mathbb{S}^{n-1}$ into integer vectors, and furthermore, with the division of the sparsity of integer vectors and the study of the Hal\'{a}sz’s type inequality, we can obtain the final result.
\end{myrem}
\begin{proof}
Let $e_p$ be the canonical generator of the Pontryagin dual of $\mathbb{F}_p$,  $e_p:\mathbb{F}_p \rightarrow \mathbb{C}$ be defined by $e_p(x)=\exp(2\pi ix/p)$. The following discrete Fourier identity in $\mathbb{F}_p$
\begin{align*}
  \delta_0(x)=\frac1p\sum_{r\in\mathbb{F}_p}e_p(r x),  \quad
  \delta_0(x)=
  \begin{cases}
    1,&x=0\\
    0,&x\neq0
  \end{cases} .
\end{align*}
Let $\xi_1,\dots,\xi_n$ be i.i.d. $\mu$-lazy random variables, for any $q\in\mathbb{F}_p$,
\begin{align*}
\textbf{P}\left(\sum_{j=1}^n\xi_ja_j = q\right)
&= \textbf{E}\left[\delta_0\left(\sum_{j=1}^n\xi_ja_j - q\right)\right]\\
& = \textbf{E}\left[\frac1p\sum_{r\in\mathbb{F}_p}e_p\left(r(\sum_{j=1}^n\xi_ja_j - q)\right)\right]\\
&= \frac1p\sum_{r\in\mathbb{F}_p}e_p(-r q)\;
     \textbf{E}\left[\prod_{j=1}^n e_p(r\xi_ja_j)\right]\\
&= \frac1p\sum_{r\in\mathbb{F}_p}e_p(-r q)\,
\prod_{j=1}^n \textbf{E}\left[e_p(r\xi_ja_j)\right].
\end{align*}
Since
\begin{align*}
  \textbf{E}\left[e_p(r\xi_ja_j)\right]
  =1-\mu_{n}
   +\frac{\mu_{n}}{2}e^{2\pi ir a_j/p}
   +\frac{\mu_{n}}{2}e^{-2\pi ir a_j/p}
  =1-\mu_{n}+\mu_{n}\cos(\frac{2\pi ra_j}{p}),
\end{align*}
we obtain
\begin{align*}
  \textbf{P}\left(\sum_{j=1}^n\xi_ja_j=q\right)
  &=\frac1p\sum_{r\in\mathbb{F}_p}e_p(-r q)
    \prod_{j=1}^n\left(1-\mu_{n}+\mu_{n}\cos(\frac{2\pi ra_j}{p})\right)\\
  &\le \frac1p\sum_{r\in\mathbb{F}_p}
    \prod_{j=1}^n\left|1-\mu_{n}+\mu_{n}\cos(\frac{2\pi ra_j}{p})\right|.
\end{align*}
where, since $0<\mu_{n}\le1/2$,
\begin{align}\label{Hal6}
\left|1-\mu_{n}+\mu_{n}\cos(\pi x)\right|
=1-2\mu_{n}\sin^2(\frac {\pi x}2)
\le\exp\left(-2\mu_{n}\sin^2(\frac {\pi x}2)\right),
\end{align}
we have 
\begin{align}\label{Hal1}
  \textbf{P}\left(\sum_{j=1}^n\xi_ja_j=q\right)\le \frac1p\sum_{r\in\mathbb{F}_p}
    \prod_{j=1}^n\exp\left(-2\mu_{n}\sin^2(\frac {\pi ra_j}p)\right).
\end{align}
Given a real number $y$, denote by $\min_{q\in \mathbb{Z}}|y-q| \in[0,1 / 2]$ the distance between $y$ and a nearest integer. Let us recall the useful inequality
\begin{align*}
\sin^2\left(\pi y\right) \ge 4\min_{q\in \mathbb{Z}}|y-q|^2.
\end{align*}
Using this inequality to bound each term on the right-hand side of (\ref{Hal1}), we have
\begin{align}\label{Hal2}
  \textbf{P}\left(\sum_{j=1}^n\xi_ja_j=q\right)\le \frac1p\sum_{r\in\mathbb{F}_p}
   \exp\left(-8\mu_{n} \sum_{j=1}^n\min_{q\in \mathbb{Z}}|\frac { ra_j}p-q|^2\right).
\end{align}
Define the  level set for each non-negative real $t$ as follows\footnote {It is introduced in Ferber, Jain, Luh and Samotij \cite{MR4273471}.}:
\begin{align*}
T_t:=\left\{r \in \mathbb{F}_p: \sum_{j=1}^n\min_{q\in \mathbb{Z}}|\frac { ra_j}p-q|^2 \leqslant t\right\}.
\end{align*}

Thus, the right-hand side of (\ref{Hal2}) can be written as
\begin{align}\label{Hal3}
\sum_{r \in \mathbb{F}_p} \exp \left(-8\mu_{n} \sum_{j=1}^n\min_{q\in \mathbb{Z}}|\frac { ra_j}p-q|^2\right)&=\sum_{r \in \mathbb{F}_p} \int_0^{\infty} \mathbb{I}_{\left[\sum_{j=1}^n\min_{q\in \mathbb{Z}}|\frac { ra_j}p-q|^2 \leqslant \frac{t}{8\mu_{n}}\right]} e^{-t}dt\\
&=\int_0^{\infty}\left|T_{\frac{t}{8\mu_{n}}}\right| e^{-t} dt.
\end{align}

In fact, due to Ferber, Jain, Luh and Samotij \cite{MR4273471},
if $t \leqslant 2 M \leqslant|\operatorname{supp}(\boldsymbol{a})| / 15$, 
\begin{align}\label{Hal4}
\left|T_t\right| \leqslant \frac{\sqrt{2 t} \cdot\left|T_{2 M}\right|}{\sqrt{M}}+1, ~\left|T_{2M}\right| \leqslant \frac{\sqrt{2} p R_k(\boldsymbol{a})}{2^{2 k} n^{2 k}}
\end{align}
hold, 
where $R_k(\boldsymbol{a})$ is defined as above. Combining (\ref{Hal2})-(\ref{Hal4}), we obtain
\begin{align*}
\rho^{(\mu_{n})}_{\mathbb{F}_p}(\boldsymbol{a}) &=
\max _{q \in \mathbb{F}_p} \textbf{P}\left(\sum_{j=1}^n \xi_j a_j=q\right)\\ & \leqslant \frac{1}{p} \int_0^{16\mu_{n} M}\left|T_{\frac{t}{8\mu_{n}}}\right| e^{-t} d t+\frac{1}{p} \int_{16\mu_{n} M}^{\infty} p e^{-t} d t \\
& \leqslant \frac{1}{p} \int_0^{16\mu_{n} M}\left(\frac{\sqrt{ t} \cdot\left|T_{2 M}\right|}{2\sqrt{\mu_{n} M}}+1\right) e^{-t} d t+e^{-16\mu_{n} M} \\
& \leqslant \frac{\left|T_{2 M}\right|}{2p \sqrt{\mu_{n} M}} \cdot \int_0^{16\mu_{n} M} \sqrt{t} e^{-t} d t+\frac{1}{p} \int_0^{16\mu_{n} M} e^{-t} d t+e^{-16\mu_{n} M} \\
& \leqslant \frac{\left|T_{2 M}\right|}{2p \sqrt{\mu_{n} M}} \cdot \frac{\sqrt{\pi}}{2}+\frac{1}{p}+e^{-16\mu_{n} M} \\
& \leqslant \frac{C R_k(\boldsymbol{a})}{2^{2 k} n^{2 k} \sqrt{\mu_{n} M}}+\frac{1}{p}+e^{-16\mu_{n} M}.
\end{align*}
\par
Finally, using Lemma \ref{Ra1} and setting $\beta = 0.01$ to complete the proof.
\end{proof}
\par
The following lemma is a direct consequence of Theorem 1.7 in \cite{MR4282089}. It tells us the upper bound on the number of vectors over $\mathbb{F}_p^n$, which do not have a 'large' subvector with'small' $R_k^*$. This is our key lemma for counting integer vectors after partitioning them by sparsity.
\begin{lemma}[Lemma 3.3 in \cite{MR4282089}]\label{num}: Let $p$ be an odd prime and let $k \in \mathbb{N}, s_1 \geq s_2 \in[n], t \in[p]$. Let
\begin{align*}
\boldsymbol{B}_{k, s_2, \geq t}^{s_1}(n):=\left\{\boldsymbol{a} \in \mathbb{F}_p^n:|\operatorname{supp}(\boldsymbol{a})| \geq s_1, \forall \boldsymbol{b} \subset \boldsymbol{a} \text { s.t. }|\operatorname{supp}(\boldsymbol{b})| \geq s_2\right.\; 
\text { we have } \left.R_k^*(\boldsymbol{b}) \geq t \cdot \frac{2^{2 k} \cdot|\boldsymbol{b}|^{2 k}}{p}\right\}
\end{align*}
Then,
\begin{align*}
\left|\boldsymbol{B}_{k, s_2, \geq t}^{s_1}(n)\right| \leq(200)^n\left(\frac{s_2}{s_1}\right)^{2 k-1} p^n t^{-n+s_2}.
\end{align*}
where $\boldsymbol{b} \subset \boldsymbol{a}$ means $\boldsymbol{b}$ can be obtained by projection from the coordinates of $\boldsymbol{a}$.
\end{lemma}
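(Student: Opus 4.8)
This statement is Lemma 3.3 of \cite{MR4282089}, where it is extracted from the counting result Theorem 1.7 of the same paper, so the shortest honest route is simply to invoke that reference. Since a plan is asked for, let me describe how I would establish it directly, staying inside the Ferber--Jain--Luh--Samotij counting framework that already underlies Theorem \ref{Hal}.

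The plan is a coordinate-revealing (peeling) argument. I would fix $\boldsymbol{a}\in\boldsymbol{B}_{k,s_2,\geq t}^{s_1}(n)$ and expose its coordinates one at a time. The positions of the zero coordinates and of the first $\lceil s_2\rceil$ nonzero ones are accounted for crudely: choosing them costs at most $\binom{n}{\lceil s_2\rceil}2^n\le C^n$ and their $\le s_2$ nonzero values cost at most $p^{s_2}$, which is the source of the $(200)^n$ and of one factor $p^{s_2}$. The point is what happens afterwards: once the running projection $\boldsymbol{b}$ has support of size $\ge s_2$, the defining property of $\boldsymbol{B}_{k,s_2,\geq t}^{s_1}(n)$ forces $R_k^*(\boldsymbol{b})\ge t\cdot 2^{2k}|\boldsymbol{b}|^{2k}/p$, i.e.\ $\boldsymbol{b}$ is $t$ times more concentrated than a generic vector of that support size. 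Converting this — via Lemma \ref{Ra1}, to pass between $R_k^*$ and the full count $R_k$, and via the Fourier expression $R_k(\boldsymbol{b})=\tfrac1p\sum_{r\in\mathbb{F}_p}\bigl(\sum_{j}2\cos(2\pi r b_j/p)\bigr)^{2k}$ — into a statement about the level sets of $\boldsymbol{b}$ as in $(\ref{Hal4})$, one finds that each newly revealed nonzero coordinate is confined to a subset of $\mathbb{F}_p$ of size $O(p/t)$; over the at most $n-s_2$ coordinates revealed after the first block this yields the decisive factor $t^{-(n-s_2)}$. The refinement $(s_2/s_1)^{2k-1}$ then comes from a sharper accounting of the order in which coordinates are revealed together with the $2k$-th-moment shape of $R_k$: in a vanishing relation $\pm b_{i_1}\pm\cdots\pm b_{i_{2k}}=0\bmod p$ one term is determined by the other $2k-1$, and iterating this across the many $s_2$-subvectors sitting inside an $s_1$-supported vector (this is where $s_1\ge s_2$ enters) produces the exponent $2k-1$ rather than a cruder power.

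The main obstacle — and the reason one would rather cite \cite{MR4282089} than reproduce its Section 3 — is the bookkeeping around $R_k^*$ versus $R_k$. The non-degeneracy requirement $|\{i_1,\dots,i_{2k}\}|\ge(1+\beta)k$ built into $R_k^*$ is exactly what makes level-set estimates of the type in $(\ref{Hal4})$ applicable, but it has to be maintained uniformly along every projection $\boldsymbol{b}$ met during the peeling, and at each support size $s_2\le m\le n$ the additive error $(40k^{0.99}n^{1.01})^k$ from Lemma \ref{Ra1} must be dominated by the main term. Pinning the exponent in $(s_2/s_1)^{2k-1}$ down to exactly $2k-1$ likewise needs the full strength of Jain's regularization of the counting function. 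For the present paper it is cleanest to take the lemma as a black box, exactly as stated.
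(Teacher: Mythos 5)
The paper itself gives no proof of this lemma --- it simply imports it as Lemma 3.3 of \cite{MR4282089} (noting it is a direct consequence of Theorem 1.7 there), which is exactly the route you recommend, so your proposal matches the paper's treatment. Your accompanying peeling sketch is only a heuristic and some of its details (for instance the claimed provenance of the $(s_2/s_1)^{2k-1}$ factor and the ``each new coordinate costs $O(p/t)$'' step) should not be taken as a substitute for the argument in \cite{MR4282089}, but since the lemma is used here as a black box, the citation is all that is required.
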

\par
We now show Basak and Rudelson's result \cite{MR3620692} about the spectral norm of sparse random matrices; we can then directly establish the relevant result about sparse random matrices with $\mu_n-$lazy entries $(\mu_{n}=\Omega(\frac{1}{n^{0.45}}))$.

\begin{lemma}[Theorem 1.7 in \cite{MR3620692}]\label{norm}
There exists $C_0 \geq 1$ such that the following holds. Let $n \in \mathbb{N}$ and $\mu_{n} \in(0,1]$ be such that $\mu_{n} \geq C_0 \frac{\log n}{n}$. Then there exist positive constants $C_{2.4}, c_{2.4}$, depending on $\mu_{n}$, so that
\begin{align*}
\textbf{P}\left(\left\|M_n\right\| \geq C_{2.4} \sqrt{n \mu_{n}}\right) \leq \exp \left(-c_{2.4} n \mu_{n}\right).
\end{align*}
\end{lemma}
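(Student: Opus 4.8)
The plan is to reduce the statement to the operator-norm bound of Basak and Rudelson and to check that the hypotheses transfer to the $\mu_n$-lazy model. The key observation is the product representation
\begin{align*}
\eta^{\mu_n} \;\overset{d}{=}\; \delta\,\varepsilon,
\end{align*}
where $\delta$ is a Bernoulli$(\mu_n)$ variable and $\varepsilon$ is an independent Rademacher variable. Hence $M_n$ has the same law as the sparse random matrix $(\delta_{ij}\varepsilon_{ij})_{i,j\le n}$ with i.i.d. Bernoulli$(\mu_n)$ ``masks'' $\delta_{ij}$ and i.i.d. bounded, mean-zero, unit-variance ``weights'' $\varepsilon_{ij}$ independent of the masks -- precisely the model covered by Theorem 1.7 in \cite{MR3620692}.

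First I would record that the weights $\varepsilon_{ij}$ meet all of the moment and tail requirements of that theorem uniformly in $n$: they are centred, have variance $1$, and are uniformly bounded (hence subgaussian with a fixed $\psi_2$-norm and with all moments controlled). Next I would note that the sparsity hypothesis $\mu_n\ge C_0\log n/n$ of the lemma -- which in the regime $\mu_n\ge n^{-0.45}$ used later holds with ample room, since $n^{-0.45}\gg \log n/n$ -- is exactly what is needed to invoke Theorem 1.7 in \cite{MR3620692}. Applying that theorem to $M_n$ yields constants $C_{2.4},c_{2.4}>0$ with
\begin{align*}
\textbf{P}\!\left(\|M_n\|\ge C_{2.4}\sqrt{n\mu_n}\right)\le \exp\!\left(-c_{2.4}\,n\mu_n\right),
\end{align*}
which is the claim. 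Only the qualitative fact that this is an event of exponentially small probability is used in the sequel, so any dependence of the constants on $\mu_n$ is harmless in the range considered.

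If one instead wanted a self-contained argument, the natural route is an $\varepsilon$-net argument: bound $\|M_n\|$ by $2\max_{x,y\in\mathcal N}\langle M_n x,y\rangle$ over a $\tfrac14$-net $\mathcal N$ of $\mathbb S^{n-1}$ with $|\mathcal N|\le 9^{n}$, and control each bilinear form $\sum_{i,j}\eta_{ij}x_j y_i$ by a Bernstein-type inequality, noting that each summand has conditional variance $\mu_n x_j^2 y_i^2$ so that the total variance is $\mu_n$. The obstacle, and the reason the cited theorem is not elementary, is that this crude bound only gives a deviation probability of order $\exp(-c\sqrt{n\mu_n})$, which cannot overcome the union bound over the $\le 9^{2n}$ pairs once $\mu_n$ is near the threshold $\log n/n$; pushing the deviation down to $\exp(-c\,n\mu_n)$ requires the finer combinatorial/moment-method control of rows with atypically many nonzero entries carried out in \cite{MR3620692}. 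For that reason I would simply quote their theorem rather than reprove it.
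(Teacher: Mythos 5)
Your proposal is correct and matches the paper's treatment: the paper states this lemma as a direct citation of Theorem 1.7 in \cite{MR3620692} without giving its own proof, exactly as you do. Your additional verification that the $\mu_n$-lazy entries decompose as a Bernoulli$(\mu_n)$ mask times an independent bounded, mean-zero, unit-variance weight (so the hypotheses of Basak--Rudelson apply in the regime $\mu_n\ge n^{-0.45}\gg \log n/n$) is a sensible and harmless addition.
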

The last lemma shows that, with very high probability, the image of a fixed unit vector under $M_n$ does not have norm $o(\sqrt{n\mu_{n}})$.
\par
\begin{lemma}[Lemma 4.1 in \cite{MR2663633}]\label{norm2}
Fix a unit vector $\boldsymbol{a}=\left(a_1, \ldots, a_n\right) \in \mathbb{S}^{n-1}$, there exist positive absolute constants $C_{2.5}$ and $c_{2.5}$ such that 
\begin{align*}
\textbf{P}\left(\|M_n\boldsymbol{a}\|_2 \leq C_{2.5} \sqrt{n \mu_{n}}\right) \leq \exp \left(-c_{2.5} n \mu_{n}\right).
\end{align*}
\end{lemma}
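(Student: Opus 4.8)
\section*{Proof proposal for Lemma \ref{norm2}}

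The plan is to prove a small-ball estimate for the single coordinate inner product and then tensorize over the $n$ independent rows of $M_n$. Write $M_n\boldsymbol a=(X_1,\dots,X_n)^{\top}$, where $X_i=\sum_{j=1}^n (M_n)_{ij}a_j$ are i.i.d. copies of $Y_{\mu_n}(\boldsymbol a)=\sum_j \eta_j^{\mu_n}a_j$. Since $\|M_n\boldsymbol a\|_2^2=\sum_i X_i^2$, it suffices to show that each $X_i$ is, with constant probability, of size $\gtrsim\sqrt{\mu_n}$, and then a Chernoff/tensorization argument upgrades this to: the event that fewer than, say, $n/2$ of the $X_i$ have $|X_i|\ge c_0\sqrt{\mu_n}$ has probability at most $e^{-c n}\le e^{-cn\mu_n}$. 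On the complement of that event, $\|M_n\boldsymbol a\|_2^2\ge (n/2)c_0^2\mu_n$, giving the claimed lower bound $C_{2.5}\sqrt{n\mu_n}$.

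First I would establish the one-row anti-concentration bound: there exist absolute constants $c_0,p_0>0$ such that $\textbf{P}(|Y_{\mu_n}(\boldsymbol a)|\ge c_0\sqrt{\mu_n})\ge p_0$ for every unit vector $\boldsymbol a$. The natural route is a second-moment (Paley--Zygmund) argument: $\textbf{E}\,Y_{\mu_n}(\boldsymbol a)^2=\mu_n\sum_j a_j^2=\mu_n$ because the $\eta_j^{\mu_n}$ are independent, mean zero, with variance $\mu_n$; and $\textbf{E}\,Y_{\mu_n}(\boldsymbol a)^4$ can be bounded by $C\mu_n$ (the diagonal terms contribute $\mu_n\sum a_j^4\le\mu_n$, and the off-diagonal $\binom{4}{2}$-type terms contribute $3\mu_n^2(\sum a_j^2)^2=3\mu_n^2\le 3\mu_n$), so $\textbf{E}\,Y^4\le C\mu_n$. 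Paley--Zygmund then yields $\textbf{P}(Y^2\ge \tfrac12\mu_n)\ge \tfrac14 (\textbf{E}\,Y^2)^2/\textbf{E}\,Y^4\ge c>0$, which is exactly the desired one-row bound with $c_0=1/\sqrt2$ and $p_0=c$.

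Next I would tensorize. Let $Z_i:=\mathbb{I}[|X_i|\ge c_0\sqrt{\mu_n}]$; these are i.i.d. Bernoulli with $\textbf{P}(Z_i=1)\ge p_0$. By a standard Chernoff bound, $\textbf{P}\big(\sum_i Z_i< \tfrac{p_0}{2}n\big)\le \exp(-c_1 n)$ for some $c_1=c_1(p_0)>0$. On the complementary event, $\|M_n\boldsymbol a\|_2^2\ge \sum_i Z_i\cdot c_0^2\mu_n\ge \tfrac{p_0}{2}c_0^2\, n\mu_n$, so setting $C_{2.5}:=\sqrt{p_0 c_0^2/2}$ gives $\textbf{P}(\|M_n\boldsymbol a\|_2\le C_{2.5}\sqrt{n\mu_n})\le \exp(-c_1 n)\le\exp(-c_{2.5}n\mu_n)$ (using $\mu_n\le1$), as claimed; here $c_{2.5}:=c_1$.

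The main obstacle is really a bookkeeping one rather than a conceptual one: making sure the constants $c_0,p_0$ in the one-row bound are genuinely \emph{absolute}, i.e.\ independent of $\boldsymbol a$ and of $\mu_n$, which is why the fourth-moment computation must be done carefully (the key point being that $\sum_j a_j^4\le(\sum_j a_j^2)^2=1$ and $\mu_n^2\le\mu_n$, so no hidden dependence on sparsity creeps in). One should also note that this lemma does \emph{not} require the lower bound $\mu_n\ge C_0\log n/n$ from Lemma \ref{norm} — the argument above works for any $\mu_n\in(0,1]$ — though in the application $\mu_n$ will be large enough that $e^{-c n\mu_n}$ is the term that matters. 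If one prefers, the one-row bound can alternatively be extracted directly from Lemma \ref{lcd} or from a Berry--Esseen comparison when $\boldsymbol a$ is spread out, combined with a trivial bound when $\boldsymbol a$ is concentrated on few coordinates, but the Paley--Zygmund route is cleanest and uniform.
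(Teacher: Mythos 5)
The paper itself does not prove this lemma; it is imported verbatim as Lemma 4.1 of G\"otze--Tikhomirov \cite{MR2663633}, so your attempt is a self-contained substitute, and unfortunately it has a genuine gap in the one-row estimate. The claim that there are \emph{absolute} constants $c_0,p_0>0$ with $\textbf{P}\bigl(|Y_{\mu_n}(\boldsymbol a)|\ge c_0\sqrt{\mu_n}\bigr)\ge p_0$ for every unit vector $\boldsymbol a$ is false: take $\boldsymbol a=e_1$, so that $Y_{\mu_n}(\boldsymbol a)=\eta_1^{\mu_n}$ and $\textbf{P}\bigl(|Y_{\mu_n}(\boldsymbol a)|\ge c_0\sqrt{\mu_n}\bigr)=\textbf{P}(\eta_1^{\mu_n}\ne 0)=\mu_n\to 0$. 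The slip is in the Paley--Zygmund step: you correctly get $\textbf{E}Y^2=\mu_n$ and $\textbf{E}Y^4\le C\mu_n$, but then the ratio is $(\textbf{E}Y^2)^2/\textbf{E}Y^4\ge \mu_n^2/(C\mu_n)=\mu_n/C$, which is \emph{not} bounded below by a constant (the kurtosis of a $\mu$-lazy variable is of order $1/\mu$, so no hypercontractive constant independent of $\mu_n$ exists). With the corrected one-row bound $\textbf{P}(|X_i|\ge c_0\sqrt{\mu_n})\gtrsim\mu_n$, your Chernoff tensorization still gives failure probability $e^{-cn\mu_n}$ (the right shape), but on the good event only about $n\mu_n$ rows each contribute $c_0^2\mu_n$, so you only obtain $\|M_n\boldsymbol a\|_2\gtrsim\sqrt{n}\,\mu_n$, which falls short of the claimed $\sqrt{n\mu_n}$ by a factor $\sqrt{\mu_n}$ --- precisely in the sparse regime $\mu_n\to0$ that the paper needs.

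The repair is to drop the fixed per-row threshold and run a Laplace-transform lower-tail argument on $\|M_n\boldsymbol a\|_2^2=\sum_i X_i^2$, using exactly the two moments you computed. Since $1-e^{-\lambda x}\ge\lambda x-\tfrac{\lambda^2}{2}x^2$ for $x\ge0$, one gets $\textbf{E}\,e^{-\lambda X^2}\le 1-\lambda\mu_n+\tfrac{\lambda^2}{2}\bigl(\mu_n+3\mu_n^2\bigr)\le 1-c\mu_n\le e^{-c\mu_n}$ for a suitable absolute $\lambda$ (e.g.\ $\lambda=0.4$, $c=0.2$, using $\mu_n\le 1/2$), and then Markov's inequality applied to $e^{-\lambda\|M_n\boldsymbol a\|_2^2}$ together with independence of the $n$ rows yields $\textbf{P}\bigl(\|M_n\boldsymbol a\|_2^2\le\varepsilon n\mu_n\bigr)\le e^{\lambda\varepsilon n\mu_n}\,e^{-cn\mu_n}\le e^{-c'n\mu_n}$ for $\varepsilon$ a small absolute constant. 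This captures the correct mechanism uniformly in $\boldsymbol a$ --- rows either hit rarely with magnitude $\asymp 1$ (as for $e_1$) or often with magnitude $\asymp\sqrt{\mu_n}$ --- without any case analysis, and it recovers the statement the paper imports from \cite{MR2663633}; alternatively one can simply cite that reference, as the paper does.
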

In Lemma \ref{norm2}, the conditions given by G\"{o}tze and Tikhomirov for $\mu_{n}$ are $\mu_{n}^{-1}\gg o(\frac {n} {ln^2n})$, $\mu_{n}=\Omega(\frac{1}{n^{0.45}})$ is clearly satisfied.

\section{Proof of main result}\label{sec3}
In this section, we are devoted to proving our main result. Throughout this section, we will take $\mu_{n}=\Omega (\frac{1}{n^{0.45}})$, $\alpha:=n^{1 / 4}$, $C^*_{2.4}:=\max\{C_{2.4},1\}$ and $\gamma:=C_{2.5} /\left(r C^*_{2.4}\right)$ (where r is a constant for the purpose of $\gamma \leq 1 / 100$ and $r\ge100$ ). Moreover, we can observe that $\textbf{P}\left(s_n\left(M_n\right) \leq \eta\right) \lesssim \frac{\eta n^{3 / 2}}{\sqrt{\mu_{n}}}$ holds trivially for $\eta \geq n^{-3 / 2}\sqrt{\mu_{n}}$, thus, we will henceforth assume that $2^{-n^{0.0001}} \leq \eta<n^{-3 / 2}\sqrt{\mu_{n}}$.
\par
Based on the magnitude of the least common denominator (LCD) of the vectors compared to $n^{3/4}\sqrt{\mu_{n}}\cdot\eta^{-1}$, we decompose the unit sphere $\mathbb{S}^{n-1}$ into $\Gamma^1(\eta) \cup \Gamma^2(\eta)$, where
\begin{align*}
\Gamma^1(\eta):=\left\{\boldsymbol{a} \in \mathbb{S}^{n-1}: \operatorname{LCD}_{\alpha, \gamma}(\boldsymbol{a}) \geq  n^{3 / 4}\sqrt{\mu_{n}} \cdot \eta^{-1}\right\},
\end{align*}
and $\Gamma^2(\eta):=\mathbb{S}^{n-1} \backslash \Gamma^1(\eta)$, we have
\begin{align*}
\textbf{P}\left(s_n\left(M_n\right) \leq \eta\right) \leq  \textbf{P}\left(\exists \boldsymbol{a} \in \Gamma^1(\eta):\left\|M_n \boldsymbol{a}\right\|_2 \leq \eta\right) +\textbf{P}\left(\exists \boldsymbol{a} \in \Gamma^2(\eta):\left\|M_n \boldsymbol{a}\right\|_2 \leq \eta\right).
\end{align*}
Therefore, Theorem \ref{main} follows from the following two propositions and the union bound.
\begin{proposition}\label{pro1}
$\textbf{P}\left(\exists \boldsymbol{a} \in \Gamma^1(\eta):\left\|M_n \boldsymbol{a}\right\|_2 \leq \eta\right) \lesssim \frac{\eta n^{3 / 2}}{\sqrt{\mu_{n}}}+n \exp (-2\sqrt{n}\mu_{n})$.
\end{proposition}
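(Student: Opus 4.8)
The plan is to handle the "high-LCD" part of the sphere via a standard net-plus-union-bound argument, using Lemma \ref{lcd} to control the small-ball probability for each individual vector and Lemma \ref{norm} to control the operator norm, so that the net we choose is fine enough relative to the Lipschitz constant of $\boldsymbol{a}\mapsto M_n\boldsymbol{a}$. First I would record the deterministic tensorization step: for a fixed $\boldsymbol{a}\in\mathbb{S}^{n-1}$, the coordinates of $M_n\boldsymbol{a}$ are i.i.d.\ copies of $S=\sum_i \xi_i a_i$, so by independence of rows and Lemma \ref{lcd} (with $\alpha=n^{1/4}$ and $\gamma=C_{2.5}/(rC^*_{2.4})$ as fixed in this section),
\begin{align*}
\textbf{P}\left(\|M_n\boldsymbol{a}\|_2\le t\sqrt{n}\right)\le \left(C\,\mathcal{L}(S,t)\right)^{\lceil n/2\rceil}\lesssim \left(\frac{t}{\sqrt{\mu_n}\,\gamma}+e^{-2\mu_n\sqrt{n}}\right)^{n/2},
\end{align*}
valid for $t\ge (2/\pi)/\operatorname{LCD}_{\gamma,\alpha}(\boldsymbol{a})$; for $\boldsymbol{a}\in\Gamma^1(\eta)$ the LCD is at least $n^{3/4}\sqrt{\mu_n}\,\eta^{-1}$, so this bound applies at scale $t\approx \eta/(n^{3/4}\sqrt{\mu_n})$, i.e.\ for $\|M_n\boldsymbol{a}\|_2$ up to roughly $\eta n^{-1/4}$.

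Next I would set up the net. On the event $\{\|M_n\|\le C_{2.4}\sqrt{n\mu_n}\}$, which by Lemma \ref{norm} fails with probability at most $\exp(-c_{2.4}n\mu_n)$, the map $\boldsymbol{a}\mapsto M_n\boldsymbol{a}$ is $C_{2.4}\sqrt{n\mu_n}$-Lipschitz. Choosing a $\delta$-net $\mathcal{N}$ of $\Gamma^1(\eta)$ with $\delta$ comparable to $\eta/(C_{2.4}\sqrt{n\mu_n})$ — so that $|\mathcal{N}|\le (3/\delta)^n\le (Cn/\eta)^n$ — if some $\boldsymbol{a}\in\Gamma^1(\eta)$ has $\|M_n\boldsymbol{a}\|_2\le\eta$ then the nearest net point $\boldsymbol{a}_0$ satisfies $\|M_n\boldsymbol{a}_0\|_2\le 2\eta$. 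A union bound over $\mathcal{N}$ together with the tensorization estimate then gives, off the bad norm event,
\begin{align*}
\textbf{P}\left(\exists \boldsymbol{a}\in\Gamma^1(\eta):\|M_n\boldsymbol{a}\|_2\le\eta\right)\le \left(\frac{Cn}{\eta}\right)^n\left(\frac{C\eta}{n\sqrt{\mu_n}\,\gamma}+e^{-2\mu_n\sqrt{n}}\right)^{n/2}+\exp(-c_{2.4}n\mu_n).
\end{align*}
Here I would split the inner bracket: the contribution of the $e^{-2\mu_n\sqrt{n}}$ term against $(Cn/\eta)^n$ is absorbed because $\eta\ge 2^{-n^{0.0001}}$ forces $(Cn/\eta)^n\le \exp(O(n^{1.0001}))$ while $\mu_n\ge n^{-0.45}$ makes $e^{-\mu_n\sqrt{n}n/2}=e^{-\Omega(n^{1.05})}$ dominate — wait, one must be slightly careful, since $n^{1.0001}$ versus $n^{1.05}$ is fine but the logarithmic factors in $(Cn/\eta)^n$ need $\eta$ not too small; this is exactly why the hypothesis $\eta\ge 2^{-n^{0.0001}}$ appears. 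For the main term, $(Cn/\eta)^n\cdot(C\eta/(n\sqrt{\mu_n}\gamma))^{n/2}$ simplifies to roughly $(C'n/(\eta\sqrt{\mu_n}\gamma))^{n/2}$, and since we have reduced to $\eta< n^{-3/2}\sqrt{\mu_n}$ this is \emph{not} automatically small — so the right move is to not use the crude net at scale $\eta$ but to exploit the slack: the tensorization bound actually controls $\|M_n\boldsymbol{a}\|_2\le \eta n^{-1/4}$-type events. I would instead invoke Lemma \ref{norm2}: for the \emph{fixed} near vector, $\|M_n\boldsymbol{a}_0\|_2> C_{2.5}\sqrt{n\mu_n}$ with probability $1-e^{-c_{2.5}n\mu_n}$, and the point of the specific choice $\gamma=C_{2.5}/(rC^*_{2.4})$ is to make the Lipschitz perturbation $C_{2.4}\sqrt{n\mu_n}\cdot\delta$ small compared to $C_{2.5}\sqrt{n\mu_n}$; combining, either $\|M_n\boldsymbol{a}\|_2$ is bounded below by $\Omega(\sqrt{n\mu_n})\gg\eta$ everywhere on the net (the typical case, with the stated failure probability $n\exp(-2\sqrt{n}\mu_n)$ after the union bound over $\le e^{O(n^{1.0001})}$ net points — here again $\eta\ge 2^{-n^{0.0001}}$ is what keeps the union bound alive), or, in the regime where the LCD is large enough that the small-ball scale reaches down near $\eta$, the $\frac{\delta}{\sqrt{\mu_n}\gamma}$ term of Lemma \ref{lcd} delivers the linear-in-$\eta$ factor $\frac{\eta n^{3/2}}{\sqrt{\mu_n}}$ directly, without the exponential net loss, because LCD$\ge n^{3/4}\sqrt{\mu_n}\eta^{-1}$ makes the single-vector small-ball probability already $\lesssim \frac{\eta}{n^{3/4}\sqrt{\mu_n}\gamma}$ and a more careful net argument (nets at the LCD scale rather than at scale $\eta$) multiplies $(\text{LCD})^n$-many points against $(\text{small ball})^{n/2}$, which balances.

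The step I expect to be the main obstacle is precisely this balancing of the net cardinality against the tensorized small-ball probability for vectors of \emph{intermediate} LCD — large enough to be in $\Gamma^1(\eta)$ but not astronomically large. The clean resolution is the two-regime dichotomy just sketched: use Lemma \ref{norm2} (a fixed-vector anticoncentration-from-below statement) together with the carefully tuned $\gamma$ to rule out $\|M_n\boldsymbol{a}\|_2=o(\sqrt{n\mu_n})$ for \emph{all} $\boldsymbol{a}\in\Gamma^1(\eta)$ simultaneously via a net at scale $\sim\eta$, which works because the relevant failure probability $e^{-c_{2.5}n\mu_n}=e^{-\Omega(n^{0.55})}$ still beats $|\mathcal{N}|\le e^{O(n^{1.0001})}$... no — $n^{0.55}\ll n^{1.0001}$, so that crude union bound fails, and one genuinely must run the net argument at the LCD-adapted scale for the high-LCD vectors, which is where the factor $n^{3/4}$ (rather than $n$ or $n^{3/2}$) in the definition of $\Gamma^1(\eta)$ is calibrated to make $(\text{LCD-scale net size})\times(\text{tensorized small ball})$ sum to $\lesssim \frac{\eta n^{3/2}}{\sqrt{\mu_n}}$. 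Keeping track of all the polynomial powers through this optimization — and verifying that the leftover exponential terms collapse into the single clean term $n\exp(-2\sqrt{n}\mu_n)$ under the hypotheses $\mu_n\ge n^{-0.45}$ and $\eta\ge 2^{-n^{0.0001}}$ — is the bookkeeping-heavy core of the argument; everything else is the standard Rudelson–Vershynin net-plus-tensorization scheme adapted to the sparse $\mu_n$-lazy setting.
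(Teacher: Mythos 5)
Your proposal is built on a net-plus-tensorization scheme over $\Gamma^1(\eta)$, and this route cannot deliver the stated bound; your own intermediate computation already shows it. After tensorizing over rows, every term from Lemma \ref{lcd} appears to a power of order $n$, so the conclusion of any such argument has the shape $(\text{net size})\times(\text{small ball})^{\Theta(n)}$ and can never be \emph{linear} in $\eta$, which is the whole content of Proposition \ref{pro1}; indeed you compute that the main term is $\bigl(C'n/(\eta\sqrt{\mu_n}\gamma)\bigr)^{n/2}\gg1$ in the relevant regime $\eta<n^{-3/2}\sqrt{\mu_n}$, and your attempted repairs (the dichotomy via Lemma \ref{norm2}, then ``nets at the LCD scale'') are acknowledged in the text to fail or are left unverified. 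A second structural obstruction: the exponential term in Lemma \ref{lcd} is $e^{-2\mu_n\alpha^2}=e^{-2\sqrt{n}\mu_n}$, which for $\mu_n\sim n^{-0.45}$ is only $e^{-2n^{0.05}}$; it cannot survive a union bound over any net of size $e^{\Omega(n^c)}$ with $c>0.05$, yet in the proposition it is multiplied only by a factor $n$. That factor $n$ is the fingerprint of the correct argument: the concentration estimate must be applied exactly once, to a single (random) vector, not over a net.

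The paper's proof does exactly that, via the invertibility-via-distance (conditioning) mechanism rather than nets. Since $M_n$ and $M_n^T$ have the same singular values, on the event $s_n(M_n)\le\eta$ there is a left unit vector $\boldsymbol a'_{M_n}$ with $\|\boldsymbol a_{M_n}'^{\,T}M_n\|_2\le\eta$; a union bound over which coordinate of $\boldsymbol a'_{M_n}$ has the largest absolute value contributes the factor $n$. One then reveals the first $n-1$ rows $X_1,\dots,X_{n-1}$, selects (measurably in these rows) a vector $\boldsymbol y\in\Gamma^1(\eta)$ with $\bigl(\sum_{i=1}^{n-1}(X_i\cdot\boldsymbol y)^2\bigr)^{1/2}\le\eta$, writes $X_n=\frac{1}{a'_n}\bigl(\boldsymbol a_{M_n}'^{\,T}M_n-\sum_{i<n}a'_iX_i\bigr)$, and deduces by Cauchy--Schwarz and $|a'_n|\ge n^{-1/2}$ that the event forces $|X_n\cdot\boldsymbol y|\le 2\eta\sqrt{n}$. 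Since $X_n$ is independent of $\boldsymbol y$ and $\operatorname{LCD}_{\gamma,\alpha}(\boldsymbol y)\ge n^{3/4}\sqrt{\mu_n}\,\eta^{-1}$, a single application of Lemma \ref{lcd} at scale $\delta=2\eta\sqrt n$ gives $\mathcal L(X_n\cdot\boldsymbol y,2\eta\sqrt n)\lesssim\eta\sqrt{n/\mu_n}+e^{-2\sqrt n\mu_n}$, and multiplying by the factor $n$ yields $\eta n^{3/2}/\sqrt{\mu_n}+n e^{-2\sqrt n\mu_n}$. You correctly identified the ingredients (Lemma \ref{lcd}, the choice $\alpha=n^{1/4}$, the role of $\Gamma^1(\eta)$), but the missing idea is this transpose-and-condition step, which replaces the exponential-size union bound by a single small-ball estimate and is what produces both the linear dependence on $\eta$ and the mild prefactor $n$ on the exponential term.
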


\begin{proposition}\label{pro2}
$\textbf{P}\left(\exists \boldsymbol{a} \in \Gamma^2(\eta):\left\|M_n \boldsymbol{a}\right\|_2 \leq \eta\right) \lesssim \exp \left(-c_{3.2} n\mu_{n}\right)$.
\end{proposition}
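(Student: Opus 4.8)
The plan is to handle $\Gamma^{2}(\eta)$ --- the unit vectors of small least common denominator --- by the classical net-plus-union-bound scheme, with the small-ball input supplied by the sparse estimate of Lemma~\ref{lcd}, the operator-norm control by Lemma~\ref{norm}, and a separate easy treatment of the genuinely compressible directions via Lemma~\ref{norm2}. Write $D_{\max}:=n^{3/4}\sqrt{\mu_{n}}\,\eta^{-1}$, so that $\Gamma^{2}(\eta)=\{\boldsymbol a\in\mathbb S^{n-1}:\operatorname{LCD}_{\gamma,\alpha}(\boldsymbol a)<D_{\max}\}$; the standing assumptions give $n^{9/4}\le D_{\max}\le 2^{2n^{0.0001}}$ and $\mu_{n}\alpha^{2}=\mu_{n}\sqrt n\ge n^{0.05}$, and both will be used to absorb error terms. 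I would first peel off the compressible vectors, say those with $\operatorname{LCD}_{\gamma,\alpha}(\boldsymbol a)$ of order at most $n^{1/4}$: such a vector lies within the small quantity $O(\gamma)$ of a unit vector supported on $O(\sqrt n)\ll\mu_{n}n/\log(1/\mu_{n})$ coordinates, and hence admits a net of cardinality $\exp\bigl(O(\sqrt n\log n)\bigr)$. On the event that $\|M_n\boldsymbol y\|_{2}\ge C_{2.5}\sqrt{n\mu_{n}}$ for every such net point $\boldsymbol y$ (Lemma~\ref{norm2}, so with failure probability $\exp(O(\sqrt n\log n)-c_{2.5}n\mu_{n})=\exp(-\Omega(n\mu_{n}))$ since $\sqrt n\log n=o(n\mu_n)$) and that $\|M_n\|\le C^{*}_{2.4}\sqrt{n\mu_{n}}$ (Lemma~\ref{norm}), the choice $\gamma=C_{2.5}/(rC^{*}_{2.4})$ with $r\ge 100$ forces $\|M_n\boldsymbol a\|_{2}\ge 0.9\,C_{2.5}\sqrt{n\mu_{n}}\gg\eta$ for every compressible $\boldsymbol a$; so this part contributes $\exp(-\Omega(n\mu_{n}))$.

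For the rest I would decompose $\{\boldsymbol a\in\Gamma^{2}(\eta):\operatorname{LCD}_{\gamma,\alpha}(\boldsymbol a)\ge n^{1/4}\}$ into the dyadic level sets $S_{D}:=\{\boldsymbol a\in\mathbb S^{n-1}:D\le\operatorname{LCD}_{\gamma,\alpha}(\boldsymbol a)<2D\}$ with $D=2^{j}\in[n^{1/4},D_{\max}]$, of which there are at most $\log_{2}D_{\max}\le 2n^{0.0001}$. For a fixed $D$, a Rudelson--Vershynin-type net lemma --- whose cardinality bound can be read off directly, or, after clearing denominators and reducing modulo a suitable prime, from the support-partitioned count of structured vectors in Lemma~\ref{num} --- produces a net $\mathcal N_{D}$ of $S_{D}$ of cardinality at most $(C_{1}D/\sqrt n)^{n}$ and mesh at most $c\alpha/D$, every point $\boldsymbol x$ of which still satisfies $\operatorname{LCD}_{\gamma,\alpha}(\boldsymbol x)\gtrsim D$. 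Lemma~\ref{lcd} then bounds the L\'evy concentration of a single coordinate of $M_n\boldsymbol x$ at any scale $\delta\gtrsim 1/D$ by $C\delta/(\sqrt{\mu_{n}}\,\gamma)+\exp(-2\mu_{n}\sqrt n)$, and tensorising over the $n$ i.i.d.\ rows (the usual $\textbf{E}\prod_{i}e^{-Z_{i}^{2}/\delta^{2}}$ estimate) gives
\begin{align*}
\textbf{P}\bigl(\|M_n\boldsymbol x\|_{2}\le\delta\sqrt n\bigr)\ \lesssim\ \Bigl(\tfrac{C\delta}{\sqrt{\mu_{n}}\,\gamma}+\exp(-2\mu_{n}\sqrt n)\Bigr)^{n}.
\end{align*}
I would take $\delta$ as large as the trade-off allows, namely $\delta\asymp\min\{\gamma\sqrt{\mu_{n}},\ \gamma\sqrt{n\mu_{n}}/D\}$, so that $(C_{1}D/\sqrt n)\cdot C\delta/(\sqrt{\mu_{n}}\gamma)\le\tfrac14$; then a union bound over $\mathcal N_{D}$ produces $(\tfrac14+(C_{1}D/\sqrt n)\exp(-2\mu_{n}\sqrt n))^{n}\le 2^{-n}$, the last inequality because $D\le D_{\max}\le 2^{2n^{0.0001}}$ while $\exp(-2\mu_{n}\sqrt n)\le\exp(-2n^{0.05})$.

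To pass from $\mathcal N_{D}$ back to $S_{D}$, note that on the event $\|M_n\|\le C^{*}_{2.4}\sqrt{n\mu_{n}}$ every $\boldsymbol a\in S_{D}$ with $\|M_n\boldsymbol a\|_{2}\le\eta$ has a net point $\boldsymbol x$ with $\|M_n\boldsymbol x\|_{2}\le\eta+C^{*}_{2.4}\sqrt{n\mu_{n}}\cdot c\alpha/D\le\delta\sqrt n$ --- here $\alpha=n^{1/4}$, $\eta<n^{-3/2}\sqrt{\mu_{n}}$, and the definition of $\delta$ are exactly what make both $\eta\le\delta\sqrt n$ and $\|M_n\|\cdot\mathrm{mesh}\le\delta\sqrt n$ hold uniformly over $n^{1/4}\le D\le D_{\max}$. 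Summing the at most $2n^{0.0001}$ level sets, the compressible contribution, and the failure probabilities of Lemmas~\ref{norm} and~\ref{norm2} then yields the claimed bound $\exp(-c_{3.2}n\mu_{n})$; the polynomial factor $n^{0.0001}$ and the terms $2^{-n}$ get absorbed because $\mu_{n}\le 1$ (indeed $\mu_{n}\to 0$).

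The hard part is that sparsity weakens Lemma~\ref{lcd} twice --- a multiplicative loss $1/\sqrt{\mu_{n}}$ and the additive term $\exp(-2\mu_{n}\alpha^{2})$ in place of $\exp(-\alpha^{2})$ --- so the net-cardinality-versus-small-ball trade-off closes only on a restricted band of scales: the L\'evy bound is useful precisely when $1/D\lesssim\delta<\gamma\sqrt{\mu_{n}}$, which simultaneously pins the compressible cut-off at $\operatorname{LCD}\asymp n^{1/4}$ (this is where $\mu_{n}\ge n^{-0.45}$ enters, via $\gamma\sqrt{\mu_{n}}\gtrsim n^{-1/4}$ and $\sqrt{n\mu_{n}}\gtrsim n^{1/4}$) and forces the conclusion scale $\delta\sqrt n$ to stay $\ge\eta$ across the whole range $D\le D_{\max}$, which is exactly what $\eta\ge 2^{-n^{0.0001}}$ --- hence $D_{\max}\le 2^{2n^{0.0001}}$ --- buys. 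Making every exponent line up simultaneously across the full ranges of $D$, $\mu_{n}$ and $\eta$, and verifying that the net points really retain $\operatorname{LCD}\gtrsim D$ (so that Lemma~\ref{lcd} applies to them) and that the support-partitioned count in Lemma~\ref{num} indeed keeps $|\mathcal N_{D}|\le(C_{1}D/\sqrt n)^{n}$, is where the genuine work lies.
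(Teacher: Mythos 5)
Your route is genuinely different from the paper's. The paper never builds nets on the sphere: it rounds every $\boldsymbol a\in\Gamma^2(\eta)$ to an integer vector via the LCD approximation (Proposition \ref{pro421}), disposes of integer vectors with support at most $n^{0.99}\mu_n$ by a direct union bound against Lemma \ref{norm2} (Lemma \ref{422}), and handles the remaining integer vectors by reducing mod the prime $p=2^{n^{0.001}}$, partitioning them through the counting family $\boldsymbol B^{s_1}_{k,s_2,\ge t}(n)$ of Lemma \ref{num}, and playing the sparse Hal\'asz bound of Theorem \ref{Hal} (via Lemma \ref{lem47}) against the count of Lemma \ref{lem48} (Proposition \ref{pro45}). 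Your plan is instead the Rudelson--Vershynin scheme: dyadic LCD level sets, nets, the sparse L\'evy bound of Lemma \ref{lcd}, tensorization over the rows, and a union bound; and you correctly identify that the restricted range $D\le D_{\max}\le 2^{2n^{0.0001}}\ll e^{\mu_n\alpha^2}$ together with $\mu_n\ge n^{-0.45}$ is what could make such a scheme close even in the square case.

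However, as written there is a genuine gap at the heart of the argument: the net lemma. The cardinality bound $|\mathcal N_D|\le (C_1D/\sqrt n)^n$ with mesh $c\alpha/D$ and net points of LCD $\gtrsim D$ is not in this paper and does not follow from Lemma \ref{num}, which counts vectors of $\mathbb F_p^n$ according to the additive statistic $R_k^*$ of their subvectors, not $\beta$-nets of LCD level sets; you would have to import and re-prove the Rudelson--Vershynin level-set net lemma, including the standard step of choosing the net inside $S_D$ so that net points really retain $\operatorname{LCD}\ge D$ (you flag this but do not resolve it, and without it Lemma \ref{lcd} cannot be applied to the net points). Moreover, the bound $(C_1D/\sqrt n)^n$ is simply false in your range $n^{1/4}\le D\le\sqrt n$ (it is less than $1$ there); the correct count of integer points of norm $O(D)$ is $(C(1+D/\sqrt n))^n$, i.e.\ $C^n$ in that range, and with this correction the union bound only closes if $\delta$ is capped at a small constant multiple of $\gamma\sqrt{\mu_n}$, which in turn forces the mesh constraint $C^*_{2.4}\sqrt{n\mu_n}\,\alpha/D\le\delta\sqrt n$ to push the compressible cut-off up to $D_0=Kn^{1/4}$ for a suitable constant $K$, and requires $\gamma\sqrt{\mu_n}\ge(2/\pi)/D_0$ --- exactly the boundary bookkeeping near $D\asymp n^{1/4}$ that you yourself defer as ``where the genuine work lies.'' So the proposal is a plausible alternative blueprint, but its decisive ingredients (the level-set net with the right cardinality in all ranges of $D$, and the case $D\asymp n^{1/4}$) are asserted rather than proved; the paper's integer-vector/counting route via Lemmas \ref{422}, \ref{lem47}, \ref{lem48} exists precisely to avoid them.
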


\begin{proof}[Proof of Proposition \ref{pro1}]
For every $M_n$ satisfying the above assumptions, since $M_n$ and $M_n^T$ have the same singular values, the event $s_n(M_n)\leq\eta$ implies the existence of a unit vector $\boldsymbol a'\in\mathbb R^n$ for which
\begin{align*}
\|\boldsymbol a'{}^T M_n\|_2 \leq\eta.
\end{align*}
For each fixed $M_n$, we define the unit vector $\boldsymbol a'\in\mathbb R^n$ satisfying the above condition to be $\boldsymbol a'_{M_n}$. Since the matrix rows are independent of each other, this leads to a partition of the space of all the above types of matrices with the smallest singular value at most $\eta$. Then, by taking a union bound, 
\begin{align*}
\textbf{P}\left(\exists \boldsymbol{a} \in \Gamma^1(\eta):\left\|M_n \boldsymbol{a}\right\|_2 \leq \eta\right)\leq n \textbf{P}\left(\exists\,\boldsymbol a\in\Gamma^1(\eta):
\|M_n\boldsymbol a\|_2\le\eta \wedge
\|\boldsymbol a'_{M_n}\|_\infty=|a'_n|\right),
\end{align*}
it suffices to show the following:
\begin{align}\label{pro30}
\textbf{P}\left(\exists\,\boldsymbol a\in\Gamma^1(\eta):
\|M_n\boldsymbol a\|_2\le\eta \wedge
\|\boldsymbol a'_{M_n}\|_\infty=|a'_n|\right)
\lesssim\eta\sqrt {\frac{n}{\mu_{n}}} +e^{-2\sqrt n\mu_{n}}.
\end{align}
To establish this, reveal the first \(n-1\) rows \(X_1,\dots,X_{n-1}\).  If there are some $\boldsymbol a\in\Gamma^1(\eta)$ satisfies $\|M_n\boldsymbol a\|_2\le\eta$, then based solely on \(X_1,\dots,X_{n-1}\) one can select a vector \(\boldsymbol y\in\Gamma^1(\eta)\) such that
\[
\left(\sum_{i=1}^{n-1}(X_i\cdot\boldsymbol y)^2\right)^{1/2}
\le\eta.
\]
Thus, for any vector $\boldsymbol{w}^{\prime} \in \mathbb{S}^{n-1}$ with $w_n^{\prime} \neq 0$, we can rewrite $X_n$ with $X_1,...,X_{n-1}$:
\begin{align*}
X_n=\frac{1}{w_n^{\prime}}\left(\boldsymbol{u}-\sum_{i=1}^{n-1} w_i^{\prime} X_i\right),
\end{align*}
where $\boldsymbol{u}:=\boldsymbol{w}^{\prime T} M_n$. Indeed, since $X_n \cdot \boldsymbol{y}$ is irrelevant to how $\boldsymbol{w}^{\prime}$ is chosen,
thus, for the event $\left\{s_n\left(M_n\right) \leq \eta\right\} \wedge\left\{\left\|\boldsymbol{a}^{\prime}{ }_{M_n}\right\|_{\infty}=\left|a_n^{\prime}\right|\right\}$ to occur, we must necessarily have
\begin{align}\label{pro31}
\left|X_n \cdot \boldsymbol{y}\right|  =\inf _{\boldsymbol{w}^{\prime} \in \mathbb{S}^{n-1}, w_n^{\prime} \neq 0} \frac{1}{\left|w_n^{\prime}\right|}\left|\boldsymbol{u} \cdot \boldsymbol{y}-\sum_{i=1}^{n-1} w_i^{\prime} X_i \cdot \boldsymbol{y}\right|,
\end{align}
then, using the inequality Cauchy-Schwarz inequality for the right-hand side of (\ref{pro31}) and setting $\boldsymbol{w}^{\prime}=\boldsymbol{a}^{\prime}{ }_{M_n}$, we have
\begin{align*}
\left|X_n \cdot \boldsymbol{y}\right|&\leq\frac{1}{\left|a_n^{\prime}\right|}\left(\left\|\boldsymbol{a}_{M_n}^{\prime T} M_n\right\|_2\|\boldsymbol{y}\|_2+\left\|\boldsymbol{a}_{M_n}^{\prime}\right\|_2\left(\sum_{i=1}^{n-1}\left(X_i \cdot \boldsymbol{y}\right)^2\right)^{1 / 2}\right)\\
&\leq\eta\sqrt{n}\left(\|\boldsymbol{y}\|_2+\left\|\boldsymbol{a}_{M_n}^{\prime}\right\|_2\right) \leq 2 \eta \sqrt{n},
\end{align*}
where the second inequality is due to the assumption that event $\{s_n\left(M_n\right) \leq \eta\} \wedge\{\left\|\boldsymbol{a}^{\prime}_{M_n}\right\|_{\infty}=\left|a_n^{\prime}\right|\}$ occurs. It follows, by definition, that the probability in (\ref{pro30}) is bounded by $\mathcal{L}\left(X_n \cdot \boldsymbol{y}, 2 \eta \sqrt{n}\right)$, and using Lemma \ref{lcd}, by
\begin{align*}
\mathcal{L}\left(X_n \cdot \boldsymbol{y}, 2 \eta \sqrt{n}\right) \lesssim \eta \sqrt{\frac{n}{\mu_{n}}}+\exp (-2\sqrt{n}\mu_{n}).
\end{align*}
which completes the proof.
\end{proof}
For the reader's convenience, we use the following three subsections to Proposition \ref{pro2}.
\subsection{Reduction to integer vectors}
Since the vectors in $\Gamma^2(\eta)$ are still essentially vectors on the unit sphere $\mathbb{S}^{n-1}$, this creates difficulties for our following study. Here, we present the first key step, which is the efficient transfer from vectors on the unit sphere to integer vectors.
\begin{proposition}\label{pro421}
With the notation above, we have
\begin{align*}
\textbf{P}(\exists \boldsymbol{a} \in \Gamma^2(\eta)&:\left\|M_n \boldsymbol{a}\right\|_2 \leq \eta) \\
\lesssim e^{{-c_{2.4}}n\mu_{n}}+\textbf{P}&(\exists \boldsymbol{w} \in\left(\mathbb{Z}^n\backslash\{\mathbf{0}\}\right)\cap\left[-2 \eta^{-1} n^{3 / 4}\sqrt{\mu_{n}}, 2 \eta^{-1} n^{3 / 4}\sqrt{\mu_{n}}\right]^n: \\
& \left.\left\|M_n \boldsymbol{w}\right\|_2 \leq \min \left\{4 \gamma C^*_{2.4} \sqrt{n\mu_{n}}\|\boldsymbol{w}\|_2, 2 C^*_{2.4} \alpha \sqrt{n\mu_{n}}\right\}\right).
\end{align*}
\end{proposition}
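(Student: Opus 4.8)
The plan is to prove Proposition~\ref{pro421} as a deterministic rounding reduction combined with a single probabilistic input, namely the operator-norm bound of Lemma~\ref{norm}. Set the good event $\mathcal E_0:=\{\|M_n\|\le C_{2.4}\sqrt{n\mu_n}\}$, so that $\textbf{P}(\mathcal E_0^{c})\le \exp(-c_{2.4}n\mu_n)$ (this is where the hypothesis $\mu_n=\Omega(n^{-0.45})$ is used, to guarantee $\mu_n\ge C_0\log n/n$). It then suffices to show the inclusion of events: on $\mathcal E_0$, the existence of some $\boldsymbol a\in\Gamma^2(\eta)$ with $\|M_n\boldsymbol a\|_2\le\eta$ forces the existence of a nonzero integer vector $\boldsymbol w$ lying in the box $[-2\eta^{-1}n^{3/4}\sqrt{\mu_n},\,2\eta^{-1}n^{3/4}\sqrt{\mu_n}]^n$ with $\|M_n\boldsymbol w\|_2\le\min\{4\gamma C^{*}_{2.4}\sqrt{n\mu_n}\|\boldsymbol w\|_2,\,2C^{*}_{2.4}\alpha\sqrt{n\mu_n}\}$; taking probabilities and adding back $\textbf{P}(\mathcal E_0^{c})$ then yields the proposition.

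First I would extract the integer witness. Since $\boldsymbol a\notin\Gamma^1(\eta)$, its least common denominator satisfies $D:=\operatorname{LCD}_{\gamma,\alpha}(\boldsymbol a)<n^{3/4}\sqrt{\mu_n}\,\eta^{-1}$, and $D>0$ because for $\theta$ small enough the nearest lattice point to $\theta\boldsymbol a$ is $\mathbf{0}$, so the defining inequality $\operatorname{dist}(\theta\boldsymbol a,\mathbb Z^n)<\min\{\gamma\theta,\alpha\}$ fails (as $\gamma<1$). Since the LCD is an infimum, pick $\theta_k\downarrow D$ with $\operatorname{dist}(\theta_k\boldsymbol a,\mathbb Z^n)<\min\{\gamma\theta_k,\alpha\}$ and let $\boldsymbol w_k\in\mathbb Z^n$ be a nearest lattice point to $\theta_k\boldsymbol a$; all $\boldsymbol w_k$ lie in a fixed bounded region, so some value $\boldsymbol w$ recurs infinitely often, and passing to the limit gives $\|D\boldsymbol a-\boldsymbol w\|_2\le\min\{\gamma D,\alpha\}$. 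Writing $\boldsymbol p:=D\boldsymbol a-\boldsymbol w$, so $D\boldsymbol a=\boldsymbol w+\boldsymbol p$ with $\|\boldsymbol p\|_2\le\min\{\gamma D,\alpha\}$, I record three elementary facts: $\boldsymbol w\neq\mathbf{0}$ (else $D=\|D\boldsymbol a\|_2=\|\boldsymbol p\|_2\le\gamma D<D$); $\|\boldsymbol w\|_\infty\le\|D\boldsymbol a\|_\infty+\|\boldsymbol p\|_\infty\le(1+\gamma)D<2\eta^{-1}n^{3/4}\sqrt{\mu_n}$ using $\|\boldsymbol a\|_2=1$ and $\gamma<1$; and, since $\gamma\le1/100$, $\|\boldsymbol w\|_2\ge\|D\boldsymbol a\|_2-\|\boldsymbol p\|_2\ge(1-\gamma)D\ge D/2$.

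Next I would transfer the norm bound from $\boldsymbol a$ to $\boldsymbol w$. Applying $M_n$ to $D\boldsymbol a=\boldsymbol w+\boldsymbol p$ and using the triangle inequality, on $\mathcal E_0$,
\[
\|M_n\boldsymbol w\|_2\le D\,\|M_n\boldsymbol a\|_2+\|M_n\|\,\|\boldsymbol p\|_2\le D\eta+C_{2.4}\sqrt{n\mu_n}\,\|\boldsymbol p\|_2 .
\]
For the second branch of the minimum I would use $\|\boldsymbol p\|_2\le\alpha$ together with $D\eta<n^{3/4}\sqrt{\mu_n}=\alpha\sqrt{n\mu_n}$ (recall $\alpha=n^{1/4}$), getting $\|M_n\boldsymbol w\|_2\le\alpha\sqrt{n\mu_n}+C_{2.4}\alpha\sqrt{n\mu_n}\le 2C^{*}_{2.4}\alpha\sqrt{n\mu_n}$. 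For the first branch I would use instead $\|\boldsymbol p\|_2\le\gamma D\le2\gamma\|\boldsymbol w\|_2$ and $D\eta\le2\|\boldsymbol w\|_2\eta\le2\gamma C^{*}_{2.4}\sqrt{n\mu_n}\|\boldsymbol w\|_2$, where the last inequality uses the standing assumption $\eta<n^{-3/2}\sqrt{\mu_n}\le\gamma C^{*}_{2.4}\sqrt{n\mu_n}$ (valid for $n$ large since $\gamma C^{*}_{2.4}$ is a fixed positive constant); this gives $\|M_n\boldsymbol w\|_2\le4\gamma C^{*}_{2.4}\sqrt{n\mu_n}\|\boldsymbol w\|_2$. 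Combining the two estimates establishes the claimed inclusion.

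I do not expect a genuine obstacle: the statement is a rounding argument plus one concentration bound, and the real combinatorial work (counting the integer vectors $\boldsymbol w$ via the Hálász-type inequality of Theorem~\ref{Hal} and the enumeration bound of Lemma~\ref{num}) is deferred to the following subsections. The only points requiring care are the limiting argument producing an honest integer witness $\boldsymbol w$ (the infimum in the LCD need not be attained), and the bookkeeping of the constants $\gamma$, $\alpha=n^{1/4}$, $C^{*}_{2.4}$ so that both branches come out with the advertised factors; in particular one must verify $\|\boldsymbol w\|_2\ge D/2$ and $\eta\le\gamma C^{*}_{2.4}\sqrt{n\mu_n}$, both of which follow from $\gamma\le1/100$ and the standing assumption $\eta<n^{-3/2}\sqrt{\mu_n}$.
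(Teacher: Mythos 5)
Your proposal is correct and follows essentially the same route as the paper's proof: condition on the spectral-norm event from Lemma~\ref{norm}, use the definition of $\operatorname{LCD}_{\gamma,\alpha}$ to round $\boldsymbol a\in\Gamma^2(\eta)$ to a nonzero integer witness $\boldsymbol w$ in the stated box, and transfer $\|M_n\boldsymbol a\|_2\le\eta$ to $\boldsymbol w$ by the triangle inequality, with $\theta(1-\gamma)\le\|\boldsymbol w\|_2\le\theta(1+\gamma)$ giving both branches of the minimum. The only (harmless) differences are organizational: you extract the witness at $\theta=D$ via a limiting argument rather than taking $\theta$ slightly above the infimum as the paper does, and you verify the two branches of the minimum directly from $\|\boldsymbol p\|_2\le\min\{\gamma D,\alpha\}$ instead of the paper's case split on $\gamma\theta\lessgtr\alpha$.
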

\begin{proof}
Let $\boldsymbol{a} \in \Gamma^2(\eta)$. Then, recall the definition of $\operatorname{LCD}$, there exists some $0< \operatorname{LCD}_{\alpha, \gamma}(\boldsymbol{a})\leq\theta  \leq n^{3 / 4}\sqrt{\mu_{n}} \eta^{-1}$ and some $\boldsymbol{w} \in \mathbb{Z}^n \backslash\{\mathbf{0}\}$ such that
\begin{align*}
\|\theta \boldsymbol{a}-\boldsymbol{w}\|_2 \leq \min \{\gamma \theta, \alpha\}.
\end{align*}
in particular, $\theta(1 - \gamma)\leq\|\boldsymbol{w}\|_2\leq \theta(1 + \gamma)$ .
\par
Note that, since $\theta(1 \pm \gamma) \leq 2 \theta$, it follows that each coordinate of $\boldsymbol{w}$ is bounded in absolute value by $2 \theta \leq 2 \eta^{-1} n^{3 / 4}\sqrt{\mu_{n}}$, i.e.,
\begin{align*}
\boldsymbol{w} \in \left(\mathbb{Z}^n \backslash\{\mathbf{0}\}\right) \cap\left[-2 \eta^{-1} n^{3 / 4}\sqrt{\mu_{n}}, 2 \eta^{-1} n^{3 / 4}\sqrt{\mu_{n}}\right]^n .
\end{align*}
If $\left\|M_n \boldsymbol{a}\right\|_2 \leq \eta$, it follows from the triangle inequality that
\begin{align*}
\left\|M_n \boldsymbol{w}\right\|_2 & =\left\|M_n(\boldsymbol{w}-\theta \boldsymbol{a})+M_n(\theta \boldsymbol{a})\right\|_2 \\
& \leq\left\|M_n\right\| \cdot\|\theta \boldsymbol{a}-\boldsymbol{w}\|_2+\theta \cdot\left\|M_n \boldsymbol{a}\right\|_2 \\
& \leq C_{2.4} \sqrt{n\mu_{n}} \cdot \min \{\gamma \theta, \alpha\}+\theta \eta\\
& \leq 2 C^*_{2.4} \sqrt{n\mu_{n}} \cdot \min \{\gamma \theta, \alpha\},
\end{align*}
where the second inequality is due to the bounds on the spectral norms of $M_n$ given by Lemma \ref{norm}:
\begin{align*}
\textbf{P}\left(\left\|M_n\right\| \geq C_{2.4} \sqrt{n \mu_{n}}\right) \leq \exp \left(-c_{2.4} n \mu_{n}\right), 
\end{align*}
and the last inequality follows since $\eta \ll \gamma \sqrt{n\mu_n}$ and $\theta \eta\leq n^{3 / 4}\sqrt{\mu_{n}}= \sqrt{n\mu_n} \alpha$. With the following simple case study, we can draw the desired conclusion.

$\textbf{Case I}$: $\gamma \theta \leq \alpha$. In this case, $\boldsymbol{w}$ satisfies
\begin{align*}
\left\|M_n \boldsymbol{w}\right\|_2 \leq 2 \gamma C^*_{2.4} \sqrt{n\mu_{n}} \theta \leq \min \left\{4 \gamma C^*_{2.4} \sqrt{n\mu_{n}}\|\boldsymbol{w}\|_2, 2 C^*_{2.4} \alpha \sqrt{n\mu_{n}}\right\},
\end{align*}
where the last inequality uses $\theta / 2 \leq \theta(1-\gamma) \leq\|\boldsymbol{w}\|_2$ and $\gamma \theta \leq \alpha$.

$\textbf{Case II}$: $\gamma \theta>\alpha$. In this case, $\boldsymbol{w}$ satisfies
\begin{align*}
\left\|M_n \boldsymbol{w}\right\|_2 & \leq 2 C^*_{2.4} \alpha \sqrt{n\mu_{n}} \leq \min \left\{2 C^*_{2.4} \gamma^{-1} \alpha \gamma \sqrt{n\mu_{n}}, 2 C^*_{2.4} \alpha \sqrt{n\mu_{n}}\right\} \\
& \leq \min \left\{4 \gamma C^*_{2.4} \sqrt{n\mu_{n}}\|\boldsymbol{w}\|_2, 2 C^*_{2.4} \alpha \sqrt{n\mu_{n}}\right\},
\end{align*}
where the last inequality uses
\begin{align*}
\|\boldsymbol{w}\|_2 \geq \theta(1-\gamma) \geq \gamma^{-1} \cdot(\gamma \theta) / 2 \geq \gamma^{-1} \alpha / 2 .
\end{align*}
\end{proof}
\par
The following study is to partition the integer vectors according to the size of their support set, and interestingly, through subsequent proofs, we find that we need to use a sparser partition of the integer vectors to study the smallest singular value of sparse matrices.

\subsection{Dealing with sparse integer vectors}
Furthermore, we need to transfer the vector to a prime range, and for a sparse vector $\boldsymbol{w}$, we have a simpler union bound. The goal of this subsection is to prove the following lemma, which follows from Lemma \ref{norm2}. Throughout this subsection and the next one, $p=2^{n^{0.001}}$ is a prime. Note that $p \gg \eta^{-1} n^{3 / 4}\gg \eta^{-1} n^{3 / 4}\sqrt{\mu_{n}}$, this satisfies the assumptions of the previous subsection.

\begin{lemma}\label{422}
\begin{align*}
\textbf{P}\left(\exists \boldsymbol{w} \in\left(\mathbb{Z}^n \backslash\{\mathbf{0}\}\right) \cap[-p, p]^n,|\operatorname{supp}(\boldsymbol{w})| \leq n^{0.99}\mu_{n}:\left\|M_n \boldsymbol{w}\right\|_2 \leq\right. & \left.4 \gamma C^*_{2.4} \sqrt{n\mu_{n}}\|\boldsymbol{w}\|_2\right) \\
\lesssim & \exp \left(-\frac{c_{2.5} n\mu_{n}}{2}\right).
\end{align*}
\end{lemma}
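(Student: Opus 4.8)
The plan is a direct net-plus-union-bound argument, which works precisely because, once the support set of a sparse vector is fixed, such vectors form a set of tiny metric entropy. First note that the event in the statement is invariant under rescaling $\boldsymbol w$, and the constraints $\boldsymbol w\in\mathbb Z^n$ and $\boldsymbol w\in[-p,p]^n$ only shrink the set of admissible $\boldsymbol w$; so it suffices to bound
\[
\textbf{P}\Bigl(\exists\,\boldsymbol a\in\mathbb S^{n-1}\text{ with }|\operatorname{supp}(\boldsymbol a)|\le s:\ \|M_n\boldsymbol a\|_2\le 4\gamma C^*_{2.4}\sqrt{n\mu_{n}}\Bigr),\qquad s:=n^{0.99}\mu_{n}.
\]
Taking a union over the support set $I\subseteq[n]$ of $\boldsymbol a$, of which there are at most $\sum_{j\le s}\binom nj\le\exp(O(s\log n))$, it is enough to show that for every fixed $I$,
\[
\textbf{P}\Bigl(\exists\,\boldsymbol a\in\mathbb S^{n-1},\ \operatorname{supp}(\boldsymbol a)\subseteq I:\ \|M_n\boldsymbol a\|_2\le 4\gamma C^*_{2.4}\sqrt{n\mu_{n}}\Bigr)\le \exp(-c_{2.4}n\mu_{n})+(3/\gamma)^{|I|}\exp(-c_{2.5}n\mu_{n}).
\]

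To prove this, fix $I$ and let $\mathcal N_I$ be a $\gamma$-net of the unit sphere of the coordinate subspace $\mathbb R^I\subseteq\mathbb R^n$, so that $|\mathcal N_I|\le(3/\gamma)^{|I|}$ and each element of $\mathcal N_I$ lies in $\mathbb S^{n-1}$. Work on the event $\mathcal E:=\{\|M_n\|\le C_{2.4}\sqrt{n\mu_{n}}\}$, whose complement has probability at most $\exp(-c_{2.4}n\mu_{n})$ by Lemma \ref{norm}. On $\mathcal E$, if some unit vector $\boldsymbol a$ supported on $I$ satisfies $\|M_n\boldsymbol a\|_2\le 4\gamma C^*_{2.4}\sqrt{n\mu_{n}}$, then, picking $\boldsymbol a_0\in\mathcal N_I$ with $\|\boldsymbol a-\boldsymbol a_0\|_2\le\gamma$,
\[
\|M_n\boldsymbol a_0\|_2\le\|M_n\boldsymbol a\|_2+\|M_n\|\,\gamma\le 4\gamma C^*_{2.4}\sqrt{n\mu_{n}}+C^*_{2.4}\sqrt{n\mu_{n}}\,\gamma=5\gamma C^*_{2.4}\sqrt{n\mu_{n}}\le C_{2.5}\sqrt{n\mu_{n}},
\]
where the last step uses $\gamma C^*_{2.4}=C_{2.5}/r$ with $r\ge100$ (recall the choice $\gamma:=C_{2.5}/(rC^*_{2.4})$). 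Since each fixed $\boldsymbol a_0\in\mathbb S^{n-1}$ obeys $\textbf{P}(\|M_n\boldsymbol a_0\|_2\le C_{2.5}\sqrt{n\mu_{n}})\le\exp(-c_{2.5}n\mu_{n})$ by Lemma \ref{norm2}, a union bound over $\mathcal N_I$ yields the displayed estimate.

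Combining the two union bounds, the probability in the lemma is at most $\exp(-c_{2.4}n\mu_{n})+\exp(O(s\log n))\,(3/\gamma)^{s}\exp(-c_{2.5}n\mu_{n})$. Here $3/\gamma$ is an absolute constant, so $(3/\gamma)^{s}=\exp(O(s))$, and since $s\log n=n^{0.99}\mu_{n}\log n=o(n\mu_{n})$ (because $n^{0.99}\log n=o(n)$) the entropy factor $\exp(O(s\log n))$ is $\exp(o(n\mu_{n}))$. Hence the bound is $\exp(-c_{2.4}n\mu_{n})+\exp\bigl(-(1-o(1))c_{2.5}n\mu_{n}\bigr)\lesssim\exp(-\tfrac{c_{2.5}}{2}n\mu_{n})$ for $n$ large, the spectral-norm term being absorbed (its exponent may be taken $\ge c_{2.5}/2$ in the relevant range of $\mu_n$, or one simply records the constant $\min\{c_{2.4},c_{2.5}/2\}$). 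The only delicate point is this entropy accounting: the argument succeeds exactly because the sparsity threshold is $n^{0.99}\mu_{n}$, so that $s\log n\ll n\mu_{n}$; for vectors with larger support a crude net bound loses too much, and one must instead combine the counting estimate of Lemma \ref{num} with the Hálász-type inequality of Theorem \ref{Hal}, which is what the next subsection does.
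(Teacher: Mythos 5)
Your argument is correct, but it is not the route the paper takes. The paper exploits the discreteness directly: since the vectors in question are integer vectors with entries in $[-p,p]$ and support of size at most $s=n^{0.99}\mu_n$, there are at most $\binom{n}{s}(3p)^{s}\ll 2^{n^{0.992}\mu_n}=\exp(o(n\mu_n))$ of them (recall $p=2^{n^{0.001}}$), so one simply applies Lemma \ref{norm2} to each fixed vector after normalization (using $4\gamma C^*_{2.4}\le C_{2.5}$) and takes a union bound; no net and no spectral-norm event are needed, and the exponent $c_{2.5}/2$ in the statement comes out directly. You instead discard the integrality and boundedness, pass to all sparse unit vectors, and run the standard compressible-vector scheme: union over supports, a $\gamma$-net on each coordinate sphere, and Lemma \ref{norm} to transfer from net points to arbitrary points. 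This is a valid and more general argument (it never uses that the vectors are integer or lie in $[-p,p]^n$), and your entropy accounting $s\log n\ll n\mu_n$ is exactly the right reason it works; the price is the extra event $\{\|M_n\|\ge C_{2.4}\sqrt{n\mu_n}\}$, so your final bound is $\exp(-\min\{c_{2.4},c_{2.5}/2\}\,n\mu_n)$ rather than the stated $\exp(-c_{2.5}n\mu_n/2)$ unless one knows $c_{2.4}\ge c_{2.5}/2$ (and $c_{2.4}$ in Lemma \ref{norm} is even allowed to depend on $\mu_n$). You flag this yourself, and it is harmless downstream since Proposition \ref{pro421} already carries an $e^{-c_{2.4}n\mu_n}$ term, but it is the one place where the paper's direct counting buys something concrete: the stated constant, with no side condition and no appeal to the operator-norm bound.
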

\begin{proof}
The number of vectors $\boldsymbol{w} \in\left(\mathbb{Z}^n \backslash\{\mathbf{0}\}\right) \cap[-p, p]^n$ with support of size no more than $n^{0.99}\mu_{n}$ is at most
\begin{align*}
\binom{n}{n^{0.99}\mu_{n}}(3 p)^{n^{0.99}\mu_{n}} \ll 2^{n^{0.992}\mu_{n}}.
\end{align*}
By Lemma \ref{norm2}, for any such vector,
\begin{align*}
\textbf{P}\left(\left\|M_n \boldsymbol{w}\right\|_2 \leq 4 \gamma C^*_{2.4} \sqrt{n\mu_{n}}\|\boldsymbol{w}\|_2\right) & \leq \textbf{P}\left(\left\|M_n \boldsymbol{w}\right\|_2 \leq C_{2.5} \sqrt{n\mu_{n}}\|\boldsymbol{w}\|_2\right) \\
& \lesssim \exp \left(-c_{2.5} n\mu_{n}\right).
\end{align*}
Therefore, the union bound gives the desired conclusion.
\end{proof}

\subsection{Dealing with non-sparse integer vectors}
Throughout this subsection, we fix
\begin{align*}
k=n^{0.01}, \quad s_1=s_2=n^{0.99}\mu_{n}.
\end{align*}
It remains to deal with integer vectors with support of size at least $n^{0.99}\mu_{n}$; this completes the division of the previous section. Formally, let
\begin{align*}
\boldsymbol{W}:=\left\{\boldsymbol{w} \in\left(\mathbb{Z}^n \backslash\{\mathbf{0}\}\right) \cap\left[-\eta^{-4}, \eta^{-4}\right]^n:|\operatorname{supp}(\boldsymbol{w})| \geq n^{0.99}\mu_{n}\right\} .
\end{align*}
Since $\eta \leq n^{-3 / 2}\sqrt{\mu_{n}}$, $\left[-\eta^{-4}, \eta^{-4}\right]^n$ can cover the space given in Proposition \ref{pro421}, and the following proposition suffices to prove Proposition \ref{pro2}.

\begin{proposition}\label{pro45}
$\textbf{P}\left(\exists \boldsymbol{w} \in \boldsymbol{W}:\left\|M_n \boldsymbol{w}\right\|_2 \leq 2 C^*_{2.4} n^{3 / 4}\sqrt{\mu_{n}}\right) \lesssim n^{-0.01 n}$.
\end{proposition}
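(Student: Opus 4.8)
My plan is to run a net argument over the integer vectors of $\boldsymbol W$: turn $\{\|M_n\boldsymbol w\|_2\le 2C^*_{2.4}n^{3/4}\sqrt{\mu_n}\}$ into a small-ball estimate that iterates over the $n$ independent rows of $M_n$, then defeat the union bound using Lemma \ref{num} for counting and Theorem \ref{Hal} for atom probabilities. First I would pass to $\mathbb F_p$ with the prime $p=2^{n^{0.001}}$: since every $\boldsymbol w\in\boldsymbol W$ has entries in $[-\eta^{-4},\eta^{-4}]$ with $2\eta^{-4}<p$ (using $\eta\ge 2^{-n^{0.0001}}$ and $n^{0.0001}\ll n^{0.001}$), reduction mod $p$ is injective on $\boldsymbol W$ and preserves supports, and I may assume $\gcd$ of the entries of $\boldsymbol w$ is $1$. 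For fixed $\boldsymbol w$ the rows $X_i\cdot\boldsymbol w$ are i.i.d.\ copies of $Y_{\mu_n}(\boldsymbol w)$, and on $\{\|M_n\boldsymbol w\|_2\le T\}$, with $T:=2C^*_{2.4}n^{3/4}\sqrt{\mu_n}$, at least $n/2$ of them lie in an interval of length $O(T/\sqrt n)=O(n^{1/4}\sqrt{\mu_n})$. A union bound over which rows these are, together with $n^{1/4}\sqrt{\mu_n}\ll p$, gives
\[
\textbf{P}\!\left(\|M_n\boldsymbol w\|_2\le T\right)\ \le\ \Bigl(C_1\,\mathcal L\bigl(Y_{\mu_n}(\boldsymbol w),\,C_2 n^{1/4}\sqrt{\mu_n}\bigr)\Bigr)^{n/2}\ \le\ \Bigl(C_3\,n^{1/4}\sqrt{\mu_n}\,\rho^{(\mu_n)}_{\mathbb F_p}(\boldsymbol w)\Bigr)^{n/2},
\]
where I write $\rho^{(\mu_n)}_{\mathbb F_p}(\boldsymbol w)$ for the largest atom probability over $\mathbb F_p$ of the reduction of $\boldsymbol w$. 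When $\|\boldsymbol w\|_2$ is large it is cleaner to normalise and invoke Lemma \ref{norm2}, using $\|\boldsymbol w\|_2\ge\sqrt{|\operatorname{supp}(\boldsymbol w)|}\ge n^{0.495}\sqrt{\mu_n}$, which already forces $\|M_n(\boldsymbol w/\|\boldsymbol w\|_2)\|_2\le C_{2.5}\sqrt{n\mu_n}$ and hence yields the coarse uniform bound $e^{-c_{2.5}n\mu_n}$.

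Next I would partition $\boldsymbol W$ dyadically by $d:=|\operatorname{supp}(\boldsymbol w)|$. In the moderate range, say $d\le n^{0.99}$, the whole block has size at most $\binom{n}{d}(3\eta^{-4})^{d}\le 2^{O\left(d(\log n+n^{0.0001})\right)}\le 2^{O(n^{0.9901})}$, so the union bound can afford the crude estimate $\rho^{(\mu_n)}_{\mathbb F_p}(\boldsymbol w)\lesssim (\mu_n d)^{-1/2}$, which I would derive from Theorem \ref{Hal} applied to a full-support subvector of $\boldsymbol w$ with the admissible pair $k\asymp n/d$, $M\asymp d$ (using the trivial bound $R_k^*(\boldsymbol b)\le 2^{2k}|\boldsymbol b|^{2k}$ and that the auxiliary term $(40k^{0.99}|\boldsymbol b|^{1.01})^k$ decays super-polynomially). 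Plugging this into the display above, and using $d\ge n^{0.99}\mu_n$ with $\mu_n\ge n^{-0.45}$, gives a per-vector probability $\le(Cn^{-c})^{n/2}$ for an absolute $c>0$; since $n^{0.9901}\ll n\log n$ the whole moderate range contributes $\le n^{-0.01n}$.

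The delicate range is $d>n^{0.99}$, where a block can be as large as $(3\eta^{-4})^n=2^{O(n^{1.0001})}$ and a merely polynomial-in-$n$ per-vector bound no longer helps; here I would invoke the structure theory. Fix $k=n^{0.01}$, $s_1=s_2=n^{0.99}\mu_n$, and a threshold $t\in[p]$ of the form $t\le c\,p\sqrt{\mu_n}\,\eta^{8}n^{-O(1)}$ — still a huge power of $2$, since $p\eta^{8}=2^{(1-o(1))n^{0.001}}$, yet $o(p)$ — and split the block into the part outside $\boldsymbol B^{s_1}_{k,s_2,\ge t}(n)$ and the part inside it. For $\boldsymbol w$ outside, there is a subvector $\boldsymbol b\subset\boldsymbol w$ with $|\operatorname{supp}(\boldsymbol b)|\ge s_2$ and $R_k^*(\boldsymbol b)<t\,2^{2k}|\boldsymbol b|^{2k}/p$; Theorem \ref{Hal}, applicable because $\mu_n\ge n^{-0.45}$ makes $30M\le s_2$, $80kM\le|\boldsymbol b|$ and $k\le|\boldsymbol b|/2$ simultaneously solvable, together with $\rho^{(\mu_n)}_{\mathbb F_p}(\boldsymbol w)\le\rho^{(\mu_n)}_{\mathbb F_p}(\boldsymbol b)$, gives $\rho^{(\mu_n)}_{\mathbb F_p}(\boldsymbol w)\lesssim p^{-1}+t\,p^{-1}\operatorname{poly}(n,\mu_n)^{-1}+(\text{negligible})$, so feeding this back and summing over the at most $(3\eta^{-4})^n$ such vectors the chosen $t$ makes the total $\le n^{-0.01n}$ (again $p\eta^{8}$ swamps every polynomial factor). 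For $\boldsymbol w$ inside $\boldsymbol B^{s_1}_{k,s_2,\ge t}(n)$, Lemma \ref{num} with $s_1=s_2$ bounds the number of such vectors by $200^n p^n t^{-(n-s_2)}$, but only the crude $\rho$-bound is available, so to close the union bound I would subdivide this part once more by the dyadic size of $\|\boldsymbol w\|_2$: for large $\|\boldsymbol w\|_2$ the probability $\textbf{P}\bigl(\|M_n(\boldsymbol w/\|\boldsymbol w\|_2)\|_2\le T/\|\boldsymbol w\|_2\bigr)$ is tiny by Lemma \ref{norm2} (or by Lemma \ref{lcd}, once $\operatorname{LCD}(\boldsymbol w/\|\boldsymbol w\|_2)$ is bounded below), which beats the lattice-point count $(C\|\boldsymbol w\|_2/\sqrt n)^n$; for moderate $\|\boldsymbol w\|_2$ that count is already small enough for the crude bound. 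Summing the $O(\log^2 n)$ resulting estimates and comparing with $n^{-0.01n}$ finishes the proof.

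The single hardest point is exactly this last one: producing a per-vector probability bound strong enough to survive a union bound over a family that can be super-exponentially large when $\eta$ is exponentially small, \emph{uniformly} in how concentrated or structured $\boldsymbol w$ is — structured vectors give no Fourier decay beyond the trivial bound, and vectors of large Euclidean norm can have least common denominator far below their norm — so the support partition, the Euclidean-norm partition, and the choices of $k,s_1,s_2,M,t$ must all be tuned together. (Since Proposition \ref{pro2} only needs $\lesssim e^{-c_{3.2}n\mu_n}$ and $n^{-0.01n}\ll e^{-c_{3.2}n\mu_n}$, there is room to relax the bookkeeping if convenient.)
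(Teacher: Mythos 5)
Your overall architecture (reduce mod $p$, tensorize the small-ball event over the independent rows, then play a counting bound against a Hal\'asz-type atom bound) is the right one, and two of your three regimes are sound: the tensorization ``at least $n/2$ rows are $O(n^{1/4}\sqrt{\mu_n})$-small'' is a valid substitute for the paper's observation that $M_n\boldsymbol{w}\in\mathbb{Z}^n$ must equal one of $(100C(n))^n$ lattice points (it merely halves the exponent), and your moderate-support range closes because the crude bound $\rho^{(\mu_n)}_{\mathbb{F}_p}(\boldsymbol{w})\lesssim(\mu_n d)^{-1/2}$ beats a subexponential count (though your stated pair $k\asymp n/d$, $M\asymp d$ is not admissible for a $d$-dimensional full-support subvector, since $80kM\asymp n>d$; apply Theorem \ref{Hal} to the full $n$-dimensional vector, or take $k=0$). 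The genuine gap is your treatment of the structured vectors, i.e.\ those inside $\boldsymbol{B}^{s_1}_{k,s_2,\ge t}(n)$ for your single threshold $t$. The outside part forces $t\lesssim p\sqrt{\mu_n}\,\eta^{8}\mathrm{poly}(n)$, so Lemma \ref{num} only bounds the inside count by roughly $200^n p^{s_2}(p/t)^{n-s_2}\approx\eta^{-8n+o(n)}$, which is $2^{8n^{1.0001}}$ when $\eta$ is near $2^{-n^{0.0001}}$. No per-vector bound at your disposal survives this union: the crude Hal\'asz bound gives only $n^{-\Theta(n)}=2^{-\Theta(n\log n)}$, and $n^{0.0001}\gg\log n$; Lemma \ref{norm2} gives only $e^{-c_{2.5}n\mu_n}\le e^{-O(n^{0.55})}$ when $\mu_n\asymp n^{-0.45}$, which cannot beat even $2^n$ vectors; and Lemma \ref{lcd} is capped by the LCD of the direction $\boldsymbol{w}/\|\boldsymbol{w}\|_2$, which does not grow with $\|\boldsymbol{w}\|_2$ for structured directions (e.g.\ $\boldsymbol{w}=N(1,\dots,1)$), so your dyadic-in-$\|\boldsymbol{w}\|_2$ rescue produces no per-vector decay to set against the lattice count $(C\|\boldsymbol{w}\|_2/\sqrt n)^n$, which ranges up to $(C\eta^{-4})^n$. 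In fact no per-vector probability estimate can close this branch as you have set it up: every $\boldsymbol{w}\in\boldsymbol{W}$ has $\rho^{(\mu_n)}(\boldsymbol{w})\ge\eta^4/(3n)$, so per-vector probabilities are never below roughly $\eta^{4n}n^{-n}$, while your inside class may have $\approx\eta^{-8n}$ members.

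The missing idea is the paper's stratification over the structure parameter itself, rather than a single inside/outside split: define $\boldsymbol{W}_t:=\{\boldsymbol{w}:\iota(\boldsymbol{w})\in\boldsymbol{B}^{s_1}_{k,s_2,\ge t-1}(n)\setminus\boldsymbol{B}^{s_1}_{k,s_2,\ge t}(n)\}$ and sum over $t$. For each layer the count $|\boldsymbol{W}_t|\lesssim 300^n(p/t)^n$ (Lemma \ref{lem48}) and the atom bound $\rho^{(\mu_n)}(\boldsymbol{w})\lesssim\frac{t}{p\,n^{0.48}\mu_n}$ (Lemma \ref{lem47}) are coupled through the same $t$, so the product of target count, vector count and per-vector probability is $(Cn^{-0.23}\mu_n^{-1/2})^n\le(Cn^{-0.005})^n$ uniformly in $t$ --- the $t$'s cancel, so the highly structured vectors are handled not by a stronger probability bound but by the fact that there are very few of them. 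The trivial bound $\rho^{(\mu_n)}(\boldsymbol{w})\ge\eta^4/(3n)\gg p^{-1/2}$, which you did not exploit, shows via Lemma \ref{lem47} that $\boldsymbol{W}_t=\emptyset$ for $t\le\sqrt p$ (killing the additive $1/p$ term), and the union over the at most $p=2^{n^{0.001}}$ values of $t$ costs only $n^{o(n)}$. (Two secondary points: your $n/2$ exponent and indeed the paper's own arithmetic yield $n^{-cn}$ with $c$ nearer $0.005$ than the stated $0.01$, which is harmless for Proposition \ref{pro2}; and iterating your two-class split with successively larger thresholds would, in the limit, reproduce exactly this layered argument.)
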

We begin with a refined division of $\boldsymbol{W}$, since $p\gg\eta^{-4}$, the natural map
\begin{align*}
\iota: \boldsymbol{W} \rightarrow \mathbb{F}_p^n,\quad \iota(\boldsymbol{w})=\boldsymbol{w}\bmod p
\end{align*}
is injective. To make it more convenient to use Theorem \ref{Ra1} and Lemma \ref{num} established in Section \ref{sec2}, we can make the following definition.
\par
For an integer $t \in[p]$, let
\begin{align*}
\boldsymbol{W}_t:=\left\{\boldsymbol{w} \in \boldsymbol{W}: \iota(\boldsymbol{w}) \in \boldsymbol{B}_{k, s_2, \geq t-1}^{s_1}(n) \backslash \boldsymbol{B}_{k, s_2, \geq t}^{s_1}(n)\right\} .
\end{align*}
We will need the following two lemmas.
\begin{lemma}\label{lem47}
There exists an absolute constant $C_{3.2}>1$ such that, for our choice of parameters, if $\boldsymbol{w} \in \boldsymbol{W}_t$, then
\begin{align*}
\rho^{(\mu_{n})}(\boldsymbol{w}) \leq \frac{C_{3.2}}{p}\left(\frac{t}{n^{0.48}\mu_{n}}+1\right).
\end{align*}
\end{lemma}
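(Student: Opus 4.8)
The plan is to transfer the problem to $\mathbb{F}_p$ and then apply the sparse Hal\'{a}sz inequality (Theorem~\ref{Hal}) together with the definition of $\boldsymbol{W}_t$. First I would observe that for any $\boldsymbol{w}\in\boldsymbol{W}$ the reduction $\iota(\boldsymbol{w})=\boldsymbol{w}\bmod p$ loses nothing in the atom probability, in the sense that $\rho^{(\mu_n)}(\boldsymbol{w})\le\rho^{(\mu_n)}_{\mathbb{F}_p}(\iota(\boldsymbol{w}))$, since any integer solution of $Y_\mu(\boldsymbol{w})=x$ maps to a solution modulo $p$; so it suffices to bound $\rho^{(\mu_n)}_{\mathbb{F}_p}(\iota(\boldsymbol{w}))$. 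Write $\boldsymbol{a}:=\iota(\boldsymbol{w})\in\mathbb{F}_p^n$, which has $|\operatorname{supp}(\boldsymbol{a})|\ge s_1=n^{0.99}\mu_n$ because $p\gg\eta^{-4}$ makes $\iota$ injective and support-preserving on $\boldsymbol{W}$.

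Next I would choose the parameter $M$ in Theorem~\ref{Hal} appropriately: the natural choice is $M\asymp n^{0.99}\mu_n/30$ (i.e. $M$ a constant multiple of $s_1$), so that the hypotheses $30M\le|\operatorname{supp}(\boldsymbol{a})|$ and $80kM\le n$ hold — the latter since $k=n^{0.01}$ gives $kM\asymp n\mu_n/30\le n/80$ once $\mu_n$ is small enough (and $\mu_n\le 1/2$ suffices). With this $M$, the error term $e^{-16\mu_n M}=e^{-\Omega(n^{0.99}\mu_n^2)}$ is dominated by $1/p$ since $p=2^{n^{0.001}}$ and $\mu_n\ge n^{-0.45}$ gives $n^{0.99}\mu_n^2\ge n^{0.09}\gg n^{0.001}$; so that term is absorbed into the $C_{3.2}/p$ on the right. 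The remaining work is to control the main term
\[
\frac{C R_k^*(\boldsymbol{a})+C(40k^{0.99}n^{1.01})^k}{2^{2k}n^{2k}\sqrt{\mu_n M}}.
\]
Here I would use that $\boldsymbol{w}\in\boldsymbol{W}_t$ means $\iota(\boldsymbol{w})\notin\boldsymbol{B}_{k,s_2,\ge t}^{s_1}(n)$; since also $|\operatorname{supp}(\boldsymbol{a})|\ge s_1$, the failure of membership must come from the existence of a subvector $\boldsymbol{b}\subset\boldsymbol{a}$ with $|\operatorname{supp}(\boldsymbol{b})|\ge s_2$ and $R_k^*(\boldsymbol{b})<t\cdot 2^{2k}|\boldsymbol{b}|^{2k}/p$. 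The monotonicity $R_k^*(\boldsymbol{a})\le$ (something comparable to $R_k^*(\boldsymbol{b})$ scaled by $(n/|\boldsymbol{b}|)^{2k}$) — or more directly, that $R_k^*(\boldsymbol{b})$ small forces $R_k^*(\boldsymbol{a})\le t\cdot 2^{2k}n^{2k}/p$ up to constants — then bounds the first term in the numerator by $O(t\,2^{2k}n^{2k}/p)$. Dividing by $2^{2k}n^{2k}\sqrt{\mu_n M}$ and using $\sqrt{\mu_n M}\asymp\sqrt{n^{0.99}\mu_n^2}=n^{0.495}\mu_n$ (up to constants) yields a contribution $O\!\big(\tfrac{t}{p\,n^{0.48}\mu_n}\big)$, matching the claimed $\tfrac{C_{3.2}}{p}\cdot\tfrac{t}{n^{0.48}\mu_n}$.

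For the combinatorial term $(40k^{0.99}n^{1.01})^k/(2^{2k}n^{2k}\sqrt{\mu_n M})$, I would check it is $O(1/p)$, hence absorbed into the $+1$ inside the parentheses: $(40k^{0.99}n^{1.01})^k/(2^{2k}n^{2k})=(10k^{0.99}n^{-0.99})^k\cdot 4^{-k}\cdot\text{const}^k$; with $k=n^{0.01}$ and $k^{0.99}n^{-0.99}=n^{0.0099-0.99}=n^{-0.9801}\to 0$, this quantity is at most $n^{-0.9\,n^{0.01}}$ or so, which is far smaller than $1/p=2^{-n^{0.001}}$; dividing by $\sqrt{\mu_n M}\ge 1$ only helps. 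Collecting these three contributions gives $\rho^{(\mu_n)}(\boldsymbol{w})\le C_{3.2}p^{-1}\big(\tfrac{t}{n^{0.48}\mu_n}+1\big)$ for a suitable absolute constant $C_{3.2}>1$.

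The main obstacle I anticipate is the bookkeeping in the step that passes from ``$\boldsymbol{b}$ has small $R_k^*$'' to ``$\boldsymbol{a}$ has small $R_k^*$'': one must argue that the solutions counted by $R_k^*(\boldsymbol{a})$ with support condition $\ge(1+0.01)k$ cannot proliferate beyond what $R_k^*(\boldsymbol{b})$ and the trivial count $(40k^{0.99}n^{1.01})^k$ of Lemma~\ref{Ra1} allow, i.e. one needs the ``$R_k$ vs.\ $R_k^\beta$'' comparison applied on the ambient index set $[n]$ together with the hypothesis that \emph{every} large subvector has small $R_k^*$. Care is also needed that the constants hidden in $M\asymp s_1$ simultaneously satisfy $30M\le s_1$ and $80kM\le n$ and keep $\sqrt{\mu_n M}$ comparable to $n^{0.495}\mu_n$; but since we have freedom in the constant and $\mu_n\le 1/2$, this is routine. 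No optimization of exponents is attempted beyond what is needed to make $n^{0.48}$ appear.
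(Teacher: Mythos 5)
Your overall plan (reduce mod $p$, apply Theorem~\ref{Hal}, absorb the exponential and combinatorial terms into the right-hand side) is the right one, and your estimates for the term $(40k^{0.99}n^{1.01})^k$ and for $e^{-16\mu_n M}$ versus $1/p$ are fine. But the step you yourself flag as the main obstacle is a genuine gap, and it does not close the way you suggest. The hypothesis $\boldsymbol{w}\in\boldsymbol{W}_t$, i.e.\ $\iota(\boldsymbol{w})\notin\boldsymbol{B}_{k,s_2,\ge t}^{s_1}(n)$ while $|\operatorname{supp}(\iota(\boldsymbol{w}))|\ge s_1$, gives only the \emph{existence} of one subvector $\boldsymbol{b}\subset\iota(\boldsymbol{w})$ with $|\operatorname{supp}(\boldsymbol{b})|\ge s_2$ and $R_k^*(\boldsymbol{b})\le t\cdot 2^{2k}|\boldsymbol{b}|^{2k}/p$; it is not true, as you write, that \emph{every} large subvector has small $R_k^*$ (membership in $\boldsymbol{B}_{k,s_2,\ge t-1}^{s_1}(n)$ gives a \emph{lower} bound on $R_k^*$ of all large subvectors, the opposite direction). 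Consequently you cannot bound $R_k^*$ of the full $n$-dimensional vector $\boldsymbol{a}=\iota(\boldsymbol{w})$: the coordinates of $\boldsymbol{a}$ outside $\boldsymbol{b}$ are completely uncontrolled and may be highly structured (for instance many equal nonzero entries), which can make $R_k^*(\boldsymbol{a})$ of order $2^{2k}n^{2k}$ up to constants even though $R_k^*(\boldsymbol{b})$ is small and $t\ll p$. Lemma~\ref{Ra1} compares $R_k$ and $R_k^\beta$ of one and the same vector and cannot transfer smallness from a subvector up to the ambient vector, so applying Theorem~\ref{Hal} to $\boldsymbol{a}$ itself cannot be made to work from the available hypotheses.

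The missing idea is to go in the other direction: instead of lifting the $R_k^*$-bound from $\boldsymbol{b}$ to $\boldsymbol{a}$, push the atom probability down to $\boldsymbol{b}$. Conditioning on the variables $\xi_j$ attached to the coordinates outside $\boldsymbol{b}$ gives the monotonicity $\rho^{(\mu_n)}_{\mathbb{F}_p}(\iota(\boldsymbol{w}))\le\rho^{(\mu_n)}_{\mathbb{F}_p}(\boldsymbol{b})$, and one then applies Theorem~\ref{Hal} to the $|\boldsymbol{b}|$-dimensional vector $\boldsymbol{b}$ itself, with $M=n^{0.96}\mu_n$, so that $30M\le s_2\le|\operatorname{supp}(\boldsymbol{b})|$ and $80kM=80n^{0.97}\mu_n\le s_2\le|\boldsymbol{b}|$ hold, and $\sqrt{\mu_n M}=n^{0.48}\mu_n$, which is exactly where the exponent $0.48$ in the statement comes from. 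Note that your larger choice $M\asymp s_1/30$ would not be admissible here, since the conditions must be verified relative to $\boldsymbol{b}$, whose dimension can be as small as $s_2$. Plugging $R_k^*(\boldsymbol{b})\le t\cdot 2^{2k}|\boldsymbol{b}|^{2k}/p$ into the Hal\'{a}sz bound for $\boldsymbol{b}$, and checking that $(40k^{0.99}|\boldsymbol{b}|^{1.01})^k\ll 2^{2k}s_2^{2k}/p$ and $e^{-16\mu_n M}=e^{-16n^{0.96}\mu_n^2}\ll 1/p$, gives $\rho^{(\mu_n)}(\boldsymbol{w})\lesssim\frac{1}{p}\bigl(\frac{t}{n^{0.48}\mu_n}+1\bigr)$ as claimed.
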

\begin{proof}
Since $\rho^{(\mu_{n})}(\boldsymbol{w}) \leq \rho^{(\mu_{n})}_{\mathbb{F}_p}(\iota(\boldsymbol{w}))=: \rho^{(\mu_{n})}_{\mathbb{F}_p}(\boldsymbol{w})$, it suffices to prove the statement for the latter quantity. Indeed, since $\boldsymbol{w} \notin \boldsymbol{B}_{k, s_2, \geq t}^{s_1}(n)$, there exists some $\boldsymbol{b} \subset \boldsymbol{w}$ such that $|\operatorname{supp}(\boldsymbol{b})| \geq s_2$ and
\begin{align*}
R_k^*(\boldsymbol{b}) \leq t \cdot \frac{2^{2 k} \cdot|\boldsymbol{b}|^{2 k}}{p}.
\end{align*}
Furthermore, for our choice of parameters, we have
\begin{align*}
\left(40 k^{0.99} n^{1.01}\right)^k \ll \frac{2^{2 k} s_2^{2 k}}{p} \leq t \cdot \frac{2^{2 k} \cdot|\boldsymbol{b}|^{2 k}}{p}.
\end{align*}
Hence, applying Hal\'{a}sz's inequality (Theorem \ref{Hal}) to the $|\boldsymbol{b}|$-dimensional vector $\boldsymbol{b}$ with
\begin{align*}
M=n^{0.96}\mu_{n},
\end{align*}
(note that this choice of $M$ satisfies the conditions $30 M \leq s_2 \leq|\boldsymbol{\textbf{supp}}(\boldsymbol{b})|$ and $80 k M \leq s_2 \leq|\boldsymbol{b}|$ needed to apply Hal\'{a}sz's inequality), and by a simple observation $\rho^{(\mu_{n})}_{\mathbb{F}_p}(\boldsymbol{w}) \leq \rho^{(\mu_{n})}_{\mathbb{F}_p}(\boldsymbol{b})$, we have

\begin{align*}
\rho^{(\mu_n)}_{\mathbb{F}_p}(\boldsymbol{w}) & \lesssim \frac{1}{p}+\frac{t \cdot \frac{2^{2 k} \cdot|\boldsymbol{b}|^{2 k}}{p}}{2^{2 k}|\boldsymbol{b}|^{2 k} n^{0.48}\mu_{n}}+e^{-n^{0.96}\mu_{n}^2} \\
& \lesssim \frac{1}{p}\left(\frac{t}{n^{0.48}\mu_{n}}+1\right).
\end{align*}
\end{proof}
\begin{lemma}\label{lem48}
For our choice of parameters, and for $\sqrt{p} \leq t \leq p$,
\begin{align*}
\left|\boldsymbol{W}_t\right| \leq(300)^n\left(\frac{p}{t}\right)^n.
\end{align*}
\end{lemma}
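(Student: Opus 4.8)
The plan is to bound $|\boldsymbol{W}_t|$ by counting the images $\iota(\boldsymbol{w})$ inside $\mathbb{F}_p^n$, exploiting the injectivity of $\iota$ on $\boldsymbol{W}$. By definition, every $\boldsymbol{w}\in\boldsymbol{W}_t$ satisfies $\iota(\boldsymbol{w})\in\boldsymbol{B}_{k,s_2,\geq t-1}^{s_1}(n)$, so it suffices to bound $|\boldsymbol{B}_{k,s_2,\geq t-1}^{s_1}(n)|$. First I would invoke Lemma \ref{num} with the chosen parameters $k=n^{0.01}$, $s_1=s_2=n^{0.99}\mu_{n}$, and the level $t-1$ in place of $t$, which gives
\begin{align*}
\left|\boldsymbol{B}_{k,s_2,\geq t-1}^{s_1}(n)\right|\leq (200)^n\left(\frac{s_2}{s_1}\right)^{2k-1}p^n(t-1)^{-n+s_2}.
\end{align*}
Since $s_1=s_2$, the factor $(s_2/s_1)^{2k-1}=1$ disappears, leaving $(200)^n p^n (t-1)^{-n+s_2}$.

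Next I would absorb the discrepancy between $t-1$ and $t$ and the shift in the exponent. For $\sqrt{p}\leq t\leq p$ we have $t-1\geq t/2$ (say, using $t\geq\sqrt p\geq 2$), hence $(t-1)^{-n+s_2}\leq (t/2)^{-n+s_2}=2^{n-s_2}t^{-n+s_2}$. Combining, $|\boldsymbol{W}_t|\leq (400)^n t^{-n}\, t^{s_2}$. The remaining point is to control the "extra" factor $t^{s_2}$ against $(p/t)^n$: writing the target bound as $(300)^n p^n t^{-n}$, it is enough to check $(400)^n t^{s_2}\leq (300)^n p^n$ up to the constant base, i.e. that $t^{s_2}$ is dominated by $p^n$ times a geometric factor. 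Since $t\leq p$ and $s_2=n^{0.99}\mu_{n}\leq n^{0.99}\ll n$, we get $t^{s_2}\leq p^{s_2}\leq p^{n^{0.99}}$, which is exponentially smaller than $p^n = p^{n}$ — more precisely $p^{s_2}/p^n = p^{-(n-s_2)}$ is far smaller than any $(3/4)^n$-type factor because $p=2^{n^{0.001}}$ makes $p^{n-s_2}$ enormous. So $(400/300)^n \cdot t^{s_2} \le (4/3)^n p^{s_2}\le p^n$ comfortably, yielding $|\boldsymbol{W}_t|\leq (300)^n p^n t^{-n}=(300)^n(p/t)^n$.

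The only genuinely delicate point is bookkeeping: making sure Lemma \ref{num} is applicable at level $t-1$ (it requires $t-1\in[p]$, which holds once $t\geq\sqrt p+1$, and one should note the degenerate case $t=\sqrt p$ separately or simply use $t\geq \sqrt p$ with a harmless rounding), and verifying that the constant $200$ inflated by the $2^{n-s_2}$ and $(4/3)^n$ slack still fits under $300$. Neither is substantive; the essential mechanism is that $s_1=s_2$ kills the $(s_2/s_1)^{2k-1}$ term and that $s_2\ll n$ makes $t^{s_2}$ negligible against $p^n$. I do not anticipate any real obstacle here — this lemma is a direct, almost mechanical, specialization of Lemma \ref{num}.
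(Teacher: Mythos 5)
Your route is the same as the paper's (injectivity of $\iota$, Lemma \ref{num} at level $t-1$, with $s_1=s_2$ killing the $(s_2/s_1)^{2k-1}$ factor, and absorbing the leftover losses into the upgrade $200\to 300$), but the bookkeeping as written does not close. Lemma \ref{num} gives $|\boldsymbol{W}_t|\le (200)^n p^n (t-1)^{-n+s_2}$; after your estimate $t-1\ge t/2$ this becomes $(400)^n p^n t^{-n} t^{s_2}$, not $(400)^n t^{-n} t^{s_2}$ --- the factor $p^n$ has been dropped (and the dropped version is not a true bound: for $t$ near $\sqrt p$ it would force $\boldsymbol{W}_t=\emptyset$, which is not what is being proved). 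With $p^n$ restored, comparing against the target $(300)^n p^n t^{-n}$ requires $(4/3)^n t^{s_2}\le 1$, which is false since $t\ge\sqrt p>1$. The check you actually perform, $(400)^n t^{s_2}\le (300)^n p^n$, succeeds only because of the missing $p^n$: the slack you exploit is exactly the factor you deleted. The root cause is that $t-1\ge t/2$ is far too crude here; it costs $2^{n-s_2}\approx 2^n$, pushing the base past $300$ before the genuinely small factor $t^{s_2}$ is even addressed.

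The fix is easy and keeps your (and the paper's) mechanism: every loss must be subexponential in $n$. Since $t\ge\sqrt p=2^{n^{0.001}/2}\gg n$, write
\begin{align*}
(t-1)^{-n+s_2}\le (t-1)^{-n}(t-1)^{s_2}\le \bigl(1-p^{-1/2}\bigr)^{-n} t^{-n}\, p^{s_2},
\end{align*}
and note that $\bigl(1-p^{-1/2}\bigr)^{-n}\le \exp\bigl(2n/\sqrt p\bigr)=1+o(1)$ while $p^{s_2}\le 2^{n^{0.001}\cdot n^{0.99}}=2^{n^{0.991}}\ll (3/2)^n$ for large $n$. Hence $(200)^n p^n (t-1)^{-n+s_2}\le (300)^n (p/t)^n$, which is precisely the paper's computation (it bounds $(t-1)^{s_2}\le p^{s_2}$ and passes from $p/(t-1)$ to $p/t$ at only a $(1+o(1))^n$ cost). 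So the idea is right and the repair is mechanical, but the specific step $t-1\ge t/2$ together with the dropped $p^n$ is a genuine error in the chain as written.
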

\begin{proof}
By definition, any $\boldsymbol{w} \in \boldsymbol{W}_t$ satisfies $\iota(\boldsymbol{w}) \in \boldsymbol{B}_{k, s_2, \geq t-1}^{s_1}(n)$. Since the map $\iota$ is injective, we get
\begin{align*}
\left|\boldsymbol{W}_t\right| \leq \left|\boldsymbol{B}_{k, s_2, \geq t-1}^{s_1}(n)\right| \leq (200)^n\left(\frac{p}{t-1}\right)^n p^{s_2} \leq(300)^n\left(\frac{p}{t}\right)^n,
\end{align*}
where the second inequality holds by Lemma \ref{num}.
\begin{align*}
(200)^n\left(\frac{p}{t}\right)^n p^{s_2} \leq(300)^n\left(\frac{p}{t}\right)^n.
\end{align*}
\end{proof}

Finally, we are in a position to prove Proposition \ref{pro45}. As discussed earlier, Lemma \ref{422} and Proposition \ref{pro45} give a further estimate for Proposition \ref{pro421}, thereby completing the proof of Proposition \ref{pro2}. Furthermore, Proposition \ref{pro1} and Proposition \ref{pro2} complete the proof of the main result Theorem \ref{main}.

\begin{proof}[\bf Proof of Proposition \ref{pro45}]
We begin by determining a lower bound for t. Note that every $\boldsymbol{w} \in \boldsymbol{W}$ has
\begin{align*}
\rho^{(\mu_{n})}(\boldsymbol{w}) \geq \eta^4 n^{-1} / 3.
\end{align*}
Since for any such vector $\boldsymbol{w}, \sum_{i=1}^n \xi_i w_i$ can only take on at most $3 n \eta^{-4}$ values, note that $\eta^4 n^{-1} / 3 \gg 1 / \sqrt{p}$, by Lemma \ref{lem47}, it is easy to see that $\boldsymbol{W}_t=\emptyset$ for all $t \leq \sqrt{p}$.
\par
Since $\boldsymbol{W}_t \subseteq \mathbb{Z}^n$ is a set of nonzero integer vectors. Then, for $C(n) \geq 1$,
\begin{align*}
\textbf{P}&\left(\exists \boldsymbol{w} \in \boldsymbol{W}_t:\left\|M_n \boldsymbol{w}\right\|_2 \leq C(n) \sqrt{n}\right)\\
& \leq \sum_{\boldsymbol{z} \in \mathbb{Z}^n \cap B(0, C(n) \sqrt{n})} \textbf{P}\left(\exists \boldsymbol{w} \in \boldsymbol{W}_t: M_n \boldsymbol{w}=\boldsymbol{z}\right) \\
& \leq\left|\mathbb{Z}^n \cap B(0, C(n) \sqrt{n})\right| \cdot \sup _{\boldsymbol{z} \in \mathbb{Z}^n \cap B(0, C(n) \sqrt{n})} \textbf{P}\left(\exists \boldsymbol{w} \in \boldsymbol{W}_t: M_n \boldsymbol{w}=\boldsymbol{z}\right) \\
& \leq(100 C(n))^n \cdot \sup _{\boldsymbol{z} \in \mathbb{Z}^n} \textbf{P}\left(\exists \boldsymbol{w} \in \boldsymbol{W}_t: M_n \boldsymbol{w}=\boldsymbol{z}\right),
\end{align*}
where the last inequality is a classical result for higher-dimensional balls covering integer points, i.e. 
\begin{align*}
\left|\mathbb{Z}^n \cap B(0, C(n) \sqrt{n})\right|\le vol(B(0, (C(n)+1) \sqrt{n})).
\end{align*}
\par
Let $C(n)=2 C^*_{2.4} n^{1 / 4}\sqrt{\mu_{n}}$, it follows from Lemma \ref{lem47} and Lemma \ref{lem48} that for all $t \geq \sqrt{p}$,
\\
$\textbf{P}\left(\exists \boldsymbol{w} \in \boldsymbol{W}_t:\left\|M_n \boldsymbol{w}\right\|_2 \leq 2 C^*_{2.4} n^{3 / 4}\sqrt{\mu_{n}}\right)$ can be bounded as follows
\begin{align*}
\left(200 C^*_{2.4} n^{1 / 4}\sqrt{\mu_{n}}\right)^n\left|\boldsymbol{W}_t\right|\left(\frac{2 C_{3.2} t}{p n^{0.48}\mu_{n}}\right)^n & \leq\left(200 C^*_{2.4} n^{1 / 4}\sqrt{\mu_{n}}\right)^n(300)^n\left(\frac{p}{t}\right)^n\left(\frac{2 C_{3.2} t}{p n^{0.48}\mu_{n}}\right)^n \\
& \ll n^{-0.01 n}.
\end{align*}
Finally, taking the union bound over integers $t \in[\sqrt{p}, p]$ completes the proof.
\par
Now, everything is ready to prove the main result of the paper.
\end{proof}
\begin{proof}[\bf Proof of Theorem \ref{main}]
As discussed in Proposition \ref{pro421}, we have
\begin{align*}
\textbf{P}(\exists \boldsymbol{a} \in \Gamma^2(\eta)&:\left\|M_n \boldsymbol{a}\right\|_2 \leq \eta) \\
\lesssim e^{{-c_{2.4}}n\mu_{n}}+\textbf{P}&(\exists \boldsymbol{w} \in\left(\mathbb{Z}^n\backslash\{\mathbf{0}\}\right)\cap\left[-2 \eta^{-1} n^{3 / 4}\sqrt{\mu_{n}}, 2 \eta^{-1} n^{3 / 4}\sqrt{\mu_{n}}\right]^n: \\
& \left.\left\|M_n \boldsymbol{w}\right\|_2 \leq \min \left\{4 \gamma C^*_{2.4} \sqrt{n\mu_{n}}\|\boldsymbol{w}\|_2, 2 C^*_{2.4} \alpha \sqrt{n\mu_{n}}\right\}\right),
\end{align*}
where
\begin{align*}
\textbf{P}&(\exists \boldsymbol{w} \in\left(\mathbb{Z}^n\backslash\{\mathbf{0}\}\right)\cap\left[-2 \eta^{-1} n^{3 / 4}\sqrt{\mu_{n}}, 2 \eta^{-1} n^{3 / 4}\sqrt{\mu_{n}}\right]^n:\left.\left\|M_n \boldsymbol{w}\right\|_2 \leq \min \left\{4 \gamma C^*_{2.4} \sqrt{n\mu_{n}}\|\boldsymbol{w}\|_2, 2 C^*_{2.4} \alpha \sqrt{n\mu_{n}}\right\}\right)\\
&\leq \textbf{P}(\exists \boldsymbol{w} \in\left(\mathbb{Z}^n\backslash\{\mathbf{0}\}\right)\cap\left[-2 \eta^{-1} n^{3 / 4}\sqrt{\mu_{n}}, 2 \eta^{-1} n^{3 / 4}\sqrt{\mu_{n}}\right]^n,|\operatorname{supp}(\boldsymbol{w})|\leq n^{0.99}\mu_n:\left\|M_n \boldsymbol{w}\right\|_2 \leq  4 \gamma C^*_{2.4} \sqrt{n\mu_{n}}\|\boldsymbol{w}\|_2)\\
&\quad +\textbf{P}(\exists \boldsymbol{w} \in\left(\mathbb{Z}^n\backslash\{\mathbf{0}\}\right)\cap\left[-2 \eta^{-1} n^{3 / 4}\sqrt{\mu_{n}}, 2 \eta^{-1} n^{3 / 4}\sqrt{\mu_{n}}\right]^n,|\operatorname{supp}(\boldsymbol{w})|\geq n^{0.99}\mu_n:\left\|M_n \boldsymbol{w}\right\|_2 \leq  2 C^*_{2.4} \alpha \sqrt{n\mu_{n}})\\
&\leq \exp{\left(-\frac{c_{2.5}n\mu_n}{2}\right)}+n^{-0.01n}.
\end{align*}
Thus, we have completed the proof of Proposition \ref{pro2}:
\begin{align*}
\textbf{P}(\exists \boldsymbol{a} \in \Gamma^2(\eta)&:\left\|M_n \boldsymbol{a}\right\|_2 \leq \eta)\lesssim\exp\left(-c_{3.2}n\mu_n\right).
\end{align*}
Furthermore, from Proposition \ref{pro1} and Proposition \ref{pro2}, we obtain
\begin{align*}
\textbf{P}\left(s_n\left(M_n\right) \leq \eta\right) &\leq \textbf{P}\left(\exists \boldsymbol{a} \in \Gamma^1(\eta):\left\|M_n \boldsymbol{a}\right\|_2 \leq \eta\right) +\textbf{P}\left(\exists \boldsymbol{a} \in \Gamma^2(\eta):\left\|M_n \boldsymbol{a}\right\|_2 \leq \eta\right)\\
&\lesssim \frac{\eta n^{3 / 2}}{\sqrt{\mu_{n}}}+n \exp (-2\sqrt{n}\mu_{n})+\exp\left(-c_{3.2}n\mu_n\right)\lesssim\frac{\eta n^{3 / 2}}{\sqrt{\mu_{n}}},
\end{align*}
since $\eta\ge2^{-n^{0.0001}}$, the last inequality holds for all sufficiently large n. This completes the proof.
\end{proof}

\section{Discussion}\label{sec4}
In the related studies on the smallest singular value of random matrices, there are fewer studies on sparse random matrices. Like Rudelson and Vershynin, we use the key notion of LCD of a vector for a simple partition of the $\mathbb{S}^{n-1}$ and introduce the counting method developed by Ferber, Jain, Luh and Samotij \cite{MR4273471} to the study of sparse vectors. Then, we extend the study of fixed $\mu$ in Tao and Vu \cite{MR2480613} to $\mu_n$, which tends to zero.
\par
In future work, there are two problems to study. The first problem is to refine the LCD to partition $\mathbb{S}^{n-1}$. Litvak and Tikhomirov \cite{MR4402560}, in solving the problem of the smallest singular value of sparse random matrices with entries 0/1, introduced the degree of unstructuredness (u-degree) to better deal with the problems posed by sparsity. Inspired by this, finding a more refined tool to replace LCD in the study of sparse matrices may lead to better estimates.
\par
Another problem is the study of symmetric sparse matrices, where the challenge lies in the fact that we cannot directly use row independence to classify the study of random matrices as the study of random vectors. The recent studies of Campos, Jenssen, Michelen and Sahasrabudhe \cite{MR4810062} on the smallest singular value of symmetric random matrices have given us some approaches which will help us in our future studies of symmetric sparse matrices.

\printbibliography

\end{document}